\theoremstyle{plain}
\newtheorem{theorem}{\bf Theorem}[section]
\newtheorem*{theorem*}{\bf Theorem}
\newtheorem{proposition}[theorem]{\bf Proposition}
\newtheorem{lemma}[theorem]{\bf Lemma}
\newtheorem{lemma-def}[theorem]{\bf Lemma--Definition}
\newtheorem{corollary}[theorem]{\bf Corollary}
\theoremstyle{definition}
\newtheorem{definition}[theorem]{\bf Definition}
\newtheorem*{definition*}{\bf Definition}
\newtheorem{problem}[theorem]{\bf Problem}
\newtheorem{conjecture}[theorem]{\bf Conjecture}
\newtheorem{example}[theorem]{\bf Example}
\newcommand		{\p}[1]	{\left(#1\right)}
\newcommand		{\abs}[1]{\left|#1\right|}
\newcommand		{\floor}[1]{\left\lfloor#1\right\rfloor}
\newcommand    	{\Z} {\mathbb Z}
\newcommand    	{\R} {\mathbb R}
\newcommand 	{\E} {\mathcal E}
\newcommand 	{\F} {\mathcal F}
\newcommand   {\homol} {{\mathrm H}}
\newcommand 	{\x} {{\bm{x}}}
\newcommand 	{\y} {{\bm{y}}}
\renewcommand 	{\v} {{\bm{v}}}
\newcommand 	{\ggamma} {{\bm{\gamma}}}
\newcommand 	{\ddelta} {{\bm{\delta}}}
\newcommand 	{\f} {{\bm{f}}}
\newcommand 	{\g} {{\bm{g}}}
\renewcommand 	{\d} {{\bm{d}}}
\newcommand 	{\0} {{\bm{0}}}
\newcommand 	{\1} {{\bm{1}}}
\newcommand 	{\flow} {\Phi}
\newcommand 	{\vertexspace} {{\R^V}}
\newcommand 	{\edgespace} {{\R^E}}
\newcommand 	{\equilibria} {{X_{G,f}}}
\newcommand 	{\argequilibria} [2] {X_{#1, #2}}
\newcommand 	{\cc} {\mathrm{cc}}
\newcommand 	{\trans} {{\mathsf{T}}}
\renewcommand 	{\ker} {\mathrm{Ker}}
\newcommand 	{\aut} {\mathrm{Aut}}
\newcommand    	{\anal} {\mathcal A}
\title[Graph Gradient Diffusion]
{From Combinatorics to Geometry: \\ The Dynamics of Graph Gradient Diffusion}
\author{Davide Sclosa}
\address{Vrije Universiteit Amsterdam, De Boelelaan 1111, 1081 HV Amsterdam, The Netherlands.
{email: \texttt{davide.sclosa@gmail.com}}. Orcid: 0000-0003-0806-2591.}
\date{\today}
\begin{document}

\begin{abstract}
We discuss a link between graph theory and geometry
that arises when considering graph dynamical systems with odd interactions.
The equilibrium set in such systems is not
a collection of isolated points, but rather a union of manifolds, which may
intersect creating singularities and may vary in dimension.
We prove that geometry and stability of such manifolds are governed by combinatorial properties
of the underlying graph. In particular, we derive an upper bound on the dimension of the
equilibrium set using graph homology and a lower bound using graph coverings.
Moreover, we show how graph automorphisms relate to geometric singularities and prove that
the decomposition of a graph into $2$-vertex-connected components
induces a decomposition of the equilibrium set that preserves three notions of stability.
\end{abstract}

\maketitle


\section{Introduction}

Throughout this paper~$G$ is a finite graph and~$f: \R \to \R$ a non-constant
odd real-analytic function.
For each vertex~$i$ let~$x_i \in \R$ evolve according to the vector field
\begin{equation} \label{eq:main}
	\dot x_i = \sum_{j\in N(i)} f(x_j - x_i),
\end{equation}
where~$N(i)$ denotes the set of neighbors of the vertex~$i$ in the graph~$G$.
Let~$\equilibria$ denote the set of equilibria of~\eqref{eq:main} up to translational symmetry.
In this paper we analyze geometry and stability of the real-analytic set~$\equilibria$
via combinatorial properties of the graph~$G$.
As such, our results provide links between graph theory,
geometry, and dynamical systems.
For example, we obtain an upper bound on the dimension of~$\equilibria$
from the homology of the graph~$G$:

\begin{theorem} \label{thm:intro_snake}
Suppose that~$f$ is a polynomial.
Let~$\homol_1(G)$ be the first homology group\footnote{In this paper homology is always over the
reals~$\homol_1(G) = \homol_1(G,\R)$. By the universal coefficient
theorem~$\dim \homol_1(G,\R)$ is the same if~$\R$ is replaced by~$\Z$,
or by the common choice in graph theory~$\Z/2\Z$.} of the graph~$G$.
Let~$C_1,\ldots,C_p$ be cycle subgraphs of~$G$ such that for all~$i=1,\ldots,p-1$
the cycle~$C_i$ intersects~$C_{i+1}$ in exactly one edge
and there are no other
edge-intersections among~$C_1,\ldots,C_p$.
Then
\[
	\dim \equilibria \leq \dim \homol_1(G) - p + 1.
\]
\end{theorem}

Theorem~\ref{thm:intro_snake} allows to distinguish dynamics on graphs that are
homotopy equivalent, see Figure~\ref{fig:homotopic}.
While Theorem~\ref{thm:intro_snake} provides an upper bound on~$\dim \equilibria$
based on graph homology, in Section~\ref{sec:coverings}
we derive a lower bound based on graph coverings. 
Another structural result will relate automorphisms~$G\to G$ to
singularities of~$\equilibria$.

A second family of results concerns stability.
Dynamical systems of the form~\eqref{eq:main} exhibit both a conserved and
a gradient energy, which can be exploited to determine stability of equilibria.
For example, we prove that the \emph{block decomposition} of a graph
into its $2$-vertex-connected components, obtained by splitting the graph at its cut vertices,
induces a stability-preserving decomposition of the
set of equilibria:

\begin{theorem} \label{thm:intro_blocks_stability}
The block decomposition~$G = G_1 \cup \cdots \cup G_p$ induces a diffeomorphism
\[
	\argequilibria{G}{f} \cong \argequilibria{G_1}{f} \times \cdots \times \argequilibria{G_p}{f}
\]
that preserves stability.
More precisely, a point~$\x \in \R^{V(G)}$ is an equilibrium of~$G$
if and only if for every block~$G_k$ the point~$\x\vert_{V(G_k)} \in \R^{V(G_k)}$
is an equilibrium of~$G_k$,
and the statement remains true if ``equilibrium'' is replaced by ``Lyapunov stable equilibrium'',
or by ``asymptotically stable equilibrium up to translational symmetry'',
or by ``linearly stable equilibrium up to translational symmetry''.
\end{theorem}

This result allows to reduce analysis to $2$-vertex-connected graphs.
Other stability results in this paper concern \emph{topological bifurcation}, that is,
the change in stability along a manifold of equilibria.

\begin{figure}
\centering
\includegraphics[width=0.35\columnwidth]{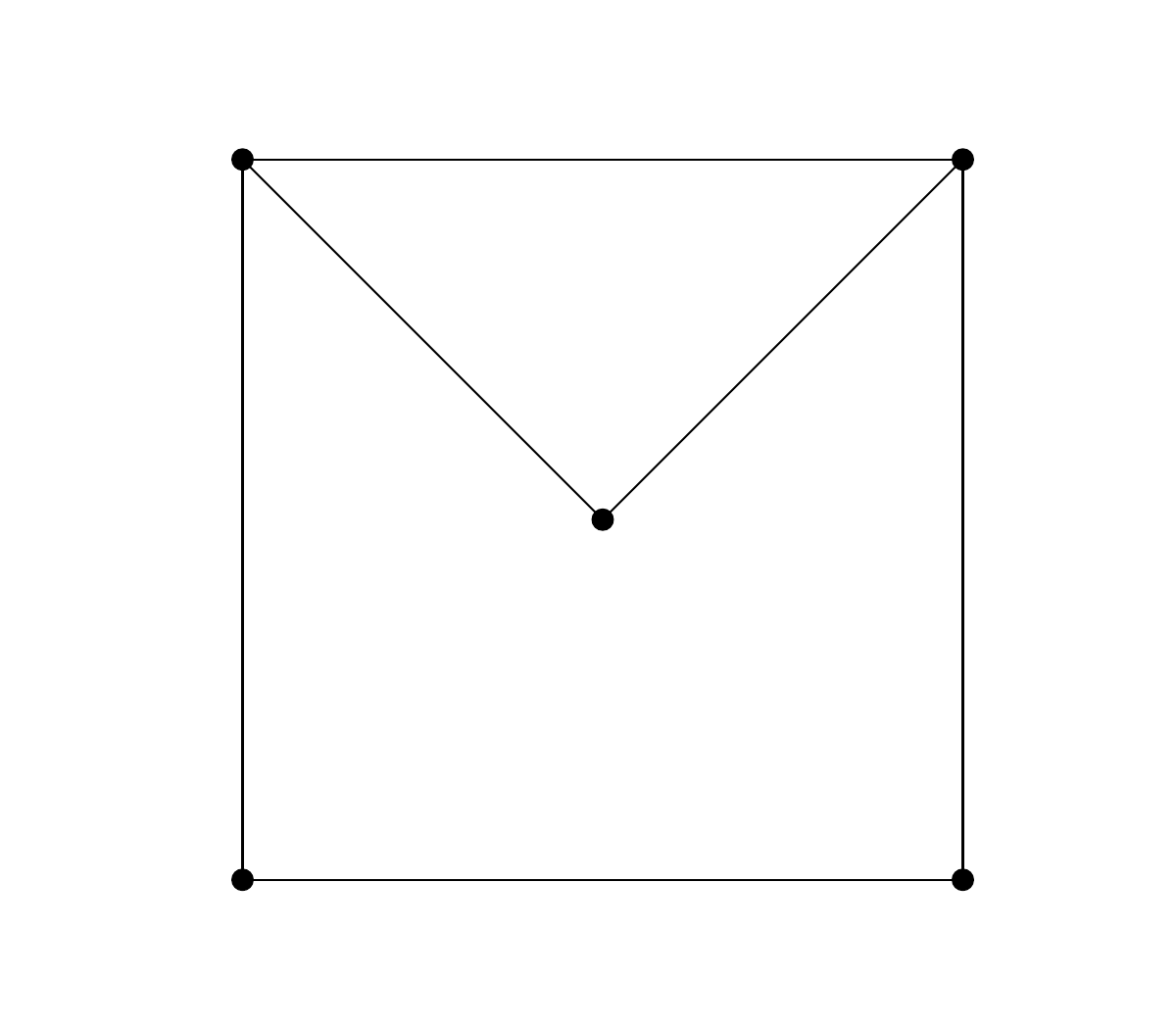}
\includegraphics[width=0.35\columnwidth]{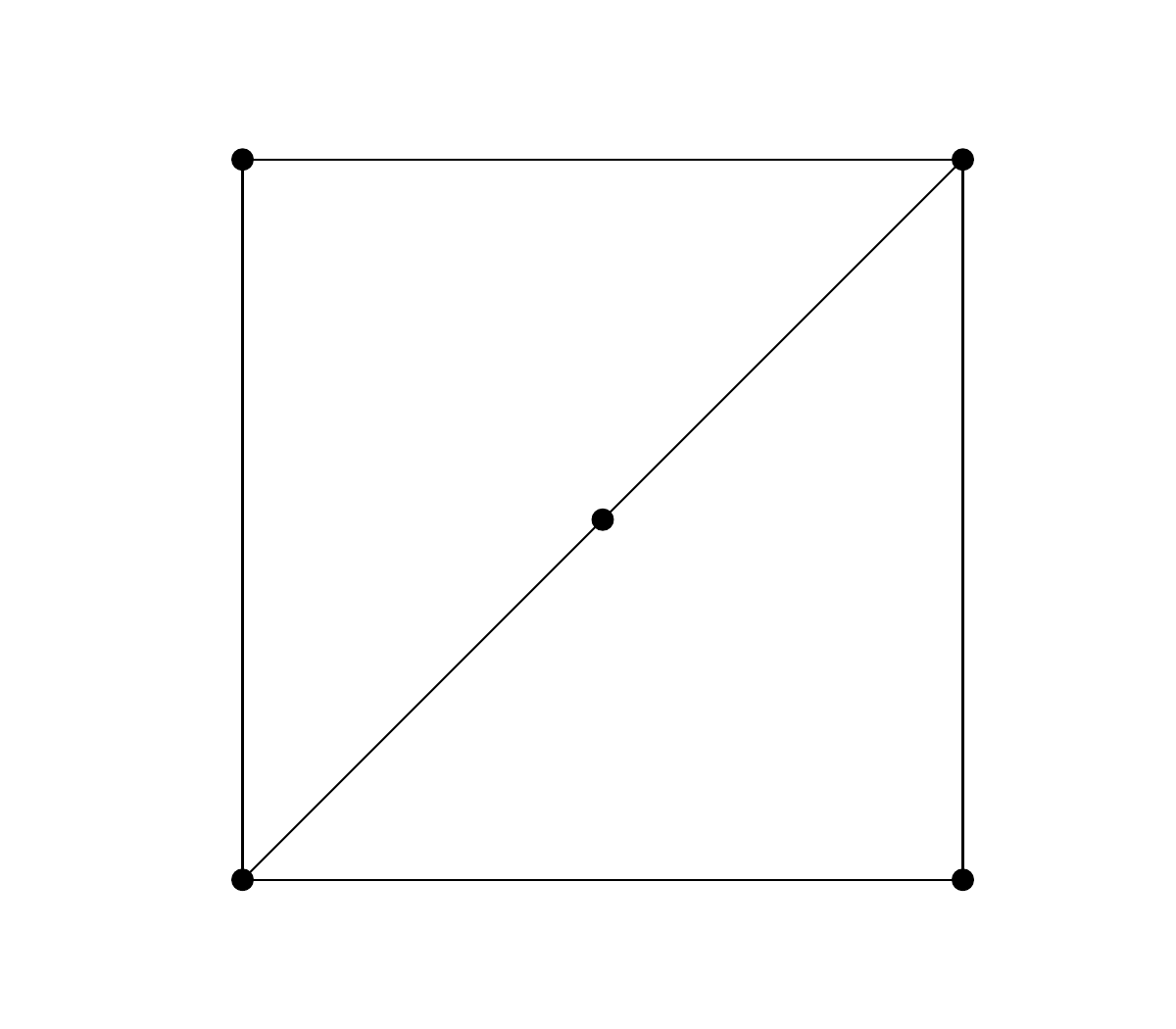}
\caption{
Although these graphs have the same number of vertices, edges, and the same homology,
the right hand side graph supports manifolds of equilibria of larger dimension.
}
\label{fig:homotopic}
\end{figure}

In recent years,
dynamical systems of the form~\eqref{eq:main} have independently emerged
across various research field, both within and beyond mathematics.
The linear case~$f(x)=x$, known as graph heat equation,
appears in
machine learning~\cite{kondor2002diffusion, belkin2003laplacian, wang2021dissecting, thanou2017learning},
control theory~\cite{medvedev2012stochastic, olfati2004consensus, olfati2006swarms},
image processing~\cite{zhang2008graph},
and electrical engineering~\cite{risi2002diffusion}.
The nonlinear case~$f(x)=\sin(x)$ is a classic topic in phase oscillator networks~\cite{brown2003globally, watanabe1997stability, ashwin2016identical, canale2009, Wiley2006, delabays2017multistability, lu2020, DeVille2016, sclosa2021completely, sclosa2022kuramoto}.
Finally, the case in which~$f$ is an odd polynomial is used to model polarization in
opinion dynamics~\cite{devriendt2021nonlinear, srivastava2011bifurcations, homs2021nonlinear}.

To further illustrate our results, we consider some concrete examples.

\begin{example} \label{ex:sine_complete}
Let~$K_n$ denote the complete graph with~$n\geq 3$ vertices and let~$f(x)=\sin(x)$.
Then the set~$\argequilibria{K_n}{f}$ is union of a family of isolated equilibrium points,
either stable or saddles, and a $(n-3)$-dimensional set
of unstable equilibria~\cite{brown2003globally, watanabe1997stability, mehta2015algebraic, ashwin2016identical}. The topology of this set is well known:
it corresponds to the configuration space of
a polygon linkage~\cite{kamiyama1996topology, kamiyama1992elementary, kapovich1996symplectic, mandini2014duistermaat}. For example, let~$n=4$.
Modulo~$2\pi$, the set~$\argequilibria{K_4}{f}$
is union of $4$~isolated saddle points, $1$~isolated stable equilibrium point,
and~$3$ mutually tangent circles of unstable equilibria~\cite[Figure~5]{sclosa2022kuramoto}.
\end{example}

\begin{example} \label{ex:poly_cycle}
Let~$C_n$ denote the cycle graph with~$n\geq 3$ vertices and let~$f$ be an odd polynomial
with~$n$ distinct real roots. Let~$x_1,\ldots,x_n$ denote the state of the vertices,
labelled in a circular order.
For every~$\lambda$ small enough the equation~$f(y)=\lambda$
has $n$~distinct roots~$y_1,\ldots,y_n$ and
an equilibrium can be obtained by
setting~$x_k = y_1+\cdots+y_{k}$~\cite[Proposition 4.4]{homs2021nonlinear}.
As~$\lambda$ varies, this gives a curve of equilibria.
For example, the set~$\argequilibria{C_3}{x^3-x}$
is union of a closed curve of stable
equilibria and an isolated unstable equilibrium point~\cite[Figure~2]{homs2021nonlinear}.
\end{example}

\begin{example} \label{ex:homotopic_I}
Let~$G_1$ and~$G_2$ denote the left hand side graph and the right hand side graph
of Figure~\ref{fig:homotopic} respectively.
Suppose that the function~$f$ is either periodic or a polynomial.
Then the set of equilibria~$\argequilibria{G_1}{f}$ is either $0$~or~$1$-dimensional,
while the set of equilibria~$\argequilibria{G_2}{f}$ can be $2$-dimensional.
The reason is given by Theorem~\ref{thm:intro_blocks_stability}:
the graph~$G_1$ contains two cycles intersecting in one edge,
while the graph~$G_2$ does not.
\end{example}

\begin{example} \label{ex:skew_book}
Let~$B_p$ be a \emph{triangular book} with~$p\geq 2$ \emph{pages}, that is,
the union of~$p$ triangles sharing a common edge, called~\emph{spine},
see Figure~\ref{fig:book_asymmetric}.
Suppose that there is~$P>0$
such that~$f(P+x)=-f(x)$ for every~$x \in \R$
(functions with this property are characterized by a
Fourier expansion of the form~$\sum_{m\, \text{odd}} a_m \sin(m\pi x/P)$).
Let the spine vertex states be equal to~$0$ and~$P$ and
denote the the states of the other vertices by~$x_1,\ldots,x_p$.
Then for every solution of the equation
$
	\sum_{k=1}^p f(x_k) = 0
$
the configuration~$(0,P,x_1,\ldots,x_p)$ is an equilibrium.
In particular~$\dim \argequilibria{B_p}{f} \geq p-1$.
It will follow from Theorem~\ref{thm:snake}
that~$\dim \argequilibria{B_p}{f} \leq p-1$,
thus~$\dim \argequilibria{B_p}{f} = p-1$.
\end{example}

\begin{figure}
\centering
\includegraphics[width=0.36\columnwidth]{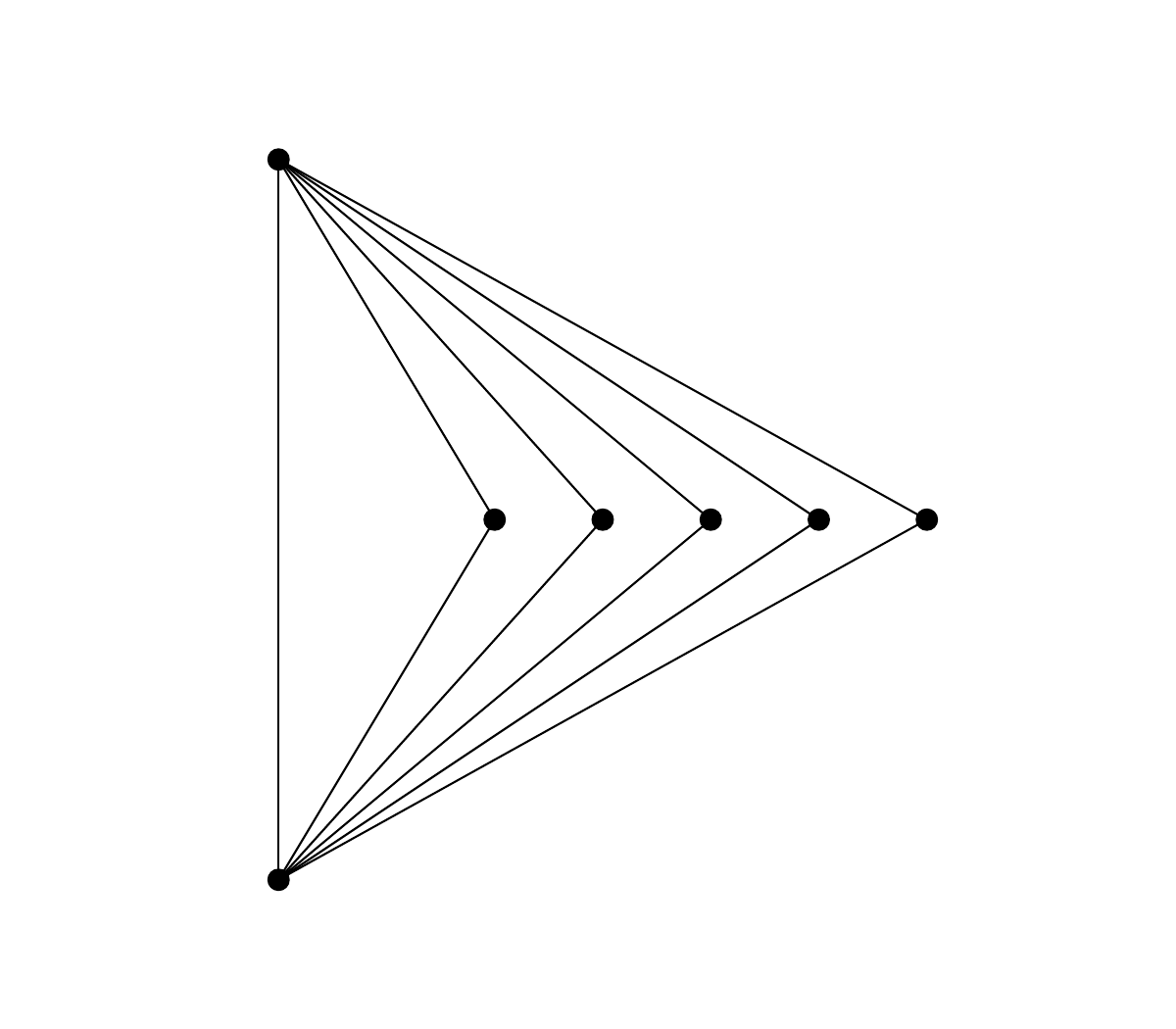}
\includegraphics[width=0.36\columnwidth]{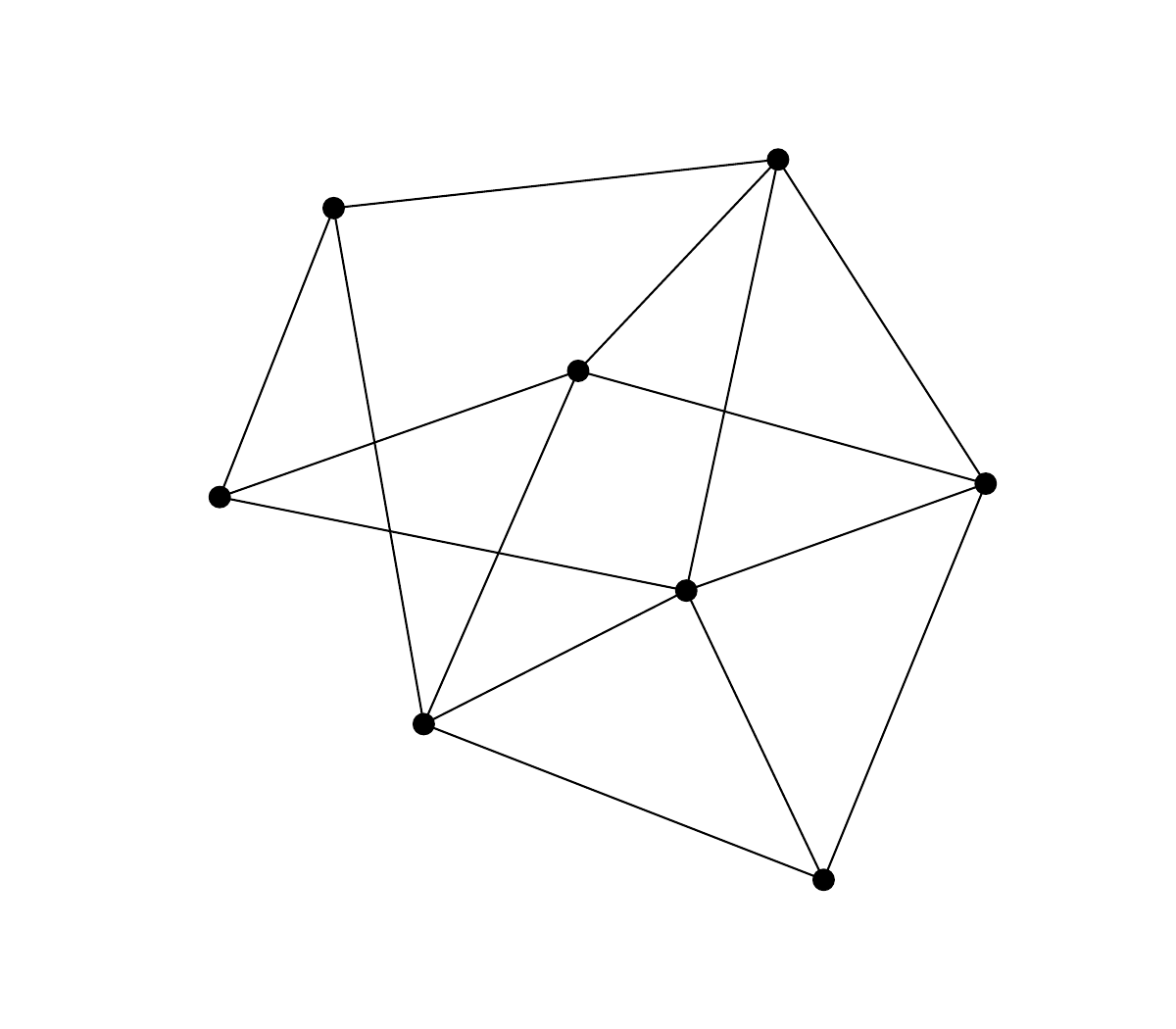}
\caption{
The triangular book with $5$~pages, on the left, supports a $4$-dimensional manifold of equilibria.
The graph on the right, although asymmetric, also
supports a continuum of equilibria.
}
\label{fig:book_asymmetric}
\end{figure}

The previous examples
might suggest that sets of equilibria of positive dimension can
only occur on very special, highly symmetric graphs. This is not the case:

\begin{example} \label{ex:asymmetric_I}
Let~$G$ denote the right hand side graph of Figure~\ref{fig:book_asymmetric}.
The graph~$G$ is \emph{asymmetric}: the only graph automorphism is the identity.
Moreover, the graph~$G$ is not planar, not regular, not bipartite, and not Eulerian.
However, it supports curves of equilibria.
Using generalized graph coverings,
we will exhibit a family of polynomials~$f$ for which~$\dim \equilibria \geq 1$,
see Example~\ref{ex:asymmetric_II}.
\end{example}


\subsection{Structure of the Paper}
The paper is structured as follows:
in Section~\ref{sec:preliminaries} we review mostly-known facts about
systems of the form~\eqref{eq:main};
Sections~\ref{sec:set_of_equilibria} and~\ref{sec:stability} contain our original contributions;
Section~\ref{sec:discussion} provides final remarks and open questions.


\section{Preliminaries} \label{sec:preliminaries}
In this section we discuss some elementary facts about
systems of the form~\eqref{eq:main}.

\begin{definition} \label{def:anal}
We denote by~$\anal$ the set of non-constant, odd, real-analytic
functions from~$\R$ to~$\R$.
\end{definition}
Since~$\anal$ contains~$f(x)=\sin(x)$ and every odd polynomial, this class covers
all the previously mentioned applications. Many of our results extend without modication
to the more general class of smooth functions with isolated critical points.

Fix a graph~$G$ and a function~$f\in \anal$.
Let~$n,m,c$ denote the number of vertices, edges and connected components of the graph~$G$.
Let~$V$ be the set of vertices.
Every edge of~$G$ can be oriented in two ways.
Fix an arbitrary orientation of each edge and let~$E \subseteq V\times V$ denote
the set of oriented edges.
If~$H$ is a subgraph of~$G$, we denote by~$V(K)$ and~$E(K)$
the set of vertices and oriented edges of~$K$.

Consider the coupled dynamical system~\eqref{eq:main}.
Analogous to Newton's third law of motion,
the input~$f(x_j - x_i)$ received by~$i$ from~$j$ is the opposite
of~$f(x_i - x_j)$, the input received by~$j$ from~$i$.
As a consequence:

\begin{lemma} \label{lem:constant_of_motion}
For every connected component~$K$ of~$G$ we have
$
	\sum_{i \in K} \dot x_i = 0.
$
In particular the quantity~$\sum_{i\in K} x_i$ is a constant of motion of the system.
\end{lemma}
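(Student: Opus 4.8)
The plan is to sum the evolution equation~\eqref{eq:main} over all vertices of a fixed connected component~$K$ and to exploit the oddness of~$f$ to obtain a pairwise cancellation, exactly in the spirit of the ``Newton's third law'' remark preceding the statement.

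First I would write
\[
	\sum_{i \in K} \dot x_i = \sum_{i \in K} \sum_{j \in N(i)} f(x_j - x_i).
\]
Since~$K$ is a connected component, every neighbor of a vertex~$i \in K$ again lies in~$K$; hence the inner sum ranges entirely within~$K$ and nothing is lost by restricting attention to the component. This lets me regroup the double sum as a sum over the edges of~$K$, each edge being visited once from each of its two endpoints.

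Next I would observe that an edge with endpoints~$i$ and~$j$ contributes exactly two terms to the double sum: the term~$f(x_j - x_i)$, arising from vertex~$i$ with neighbor~$j$, and the term~$f(x_i - x_j)$, arising from vertex~$j$ with neighbor~$i$. Because~$f$ is odd we have~$f(x_i - x_j) = -f(x_j - x_i)$, so the two contributions of each edge cancel. Summing over all edges of~$K$ then yields~$\sum_{i \in K} \dot x_i = 0$, which is the first assertion.

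Finally, the constant-of-motion statement follows by integrating in time: since~$\frac{d}{dt} \sum_{i \in K} x_i = \sum_{i \in K} \dot x_i = 0$ along every solution, the quantity~$\sum_{i \in K} x_i$ is conserved. There is no genuine obstacle in this argument; the only point demanding care is the combinatorial bookkeeping guaranteeing that every edge is counted with both of its orientations, which is precisely where the oddness of~$f$ enters.
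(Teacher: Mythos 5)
Your proof is correct and follows essentially the same route as the paper: regrouping the vertex sum $\sum_{i\in K}\dot x_i$ into a sum over the edges of~$K$, with the two contributions $f(x_j-x_i)$ and $f(x_i-x_j)$ of each edge cancelling by the oddness of~$f$. The paper states this cancellation in one line over oriented edges; your version merely spells out the bookkeeping more explicitly, including the observation that neighbors of vertices in~$K$ remain in~$K$.
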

\begin{proof}
Since~$f$ is odd we have
\[
	\sum_{i \in K} \dot x_i = \sum_{(j,k)\in E(K)} f(x_j-x_k) - f(x_k-x_j) = 0.
\]
This completes the proof.
\end{proof}

Lemma~\ref{lem:constant_of_motion} shows that the dynamical system~\eqref{eq:main}
has~$c$ independent conserved quantities~$\sum_{i\in K} x_i$, one for each connected component~$K$.
Therefore, one can interpret dynamics as a diffusive process on the graph vertices.

To proceed further, we will need the notion of graph homology~\cite{diestel, hatcher}.

\begin{definition}
Consider the linear map~$B: \edgespace\to\vertexspace$, associated to the graph~$G$,
induced by the matrix~$B$ with entries~$b_{i, (j,k)} = \delta_{ik}-\delta_{ij}$,
where~$\delta$ is the Kronecker delta.
The matrix~$B$ is called \emph{incidence matrix}.
The subspaces~$\homol_0(G) = \ker (B^\trans)$ and~$\homol_1(G) = \ker (B)$
are called~\emph{zeroth}
and~\emph{first homology group} respectively.
\end{definition}

When no confusion arises, we will drop dependence on~$G$ and simply
denote~$\homol_0$ and~$\homol_1$.
The following lemma describes the gradient structure of the system.

\begin{lemma} \label{lem:energy}
Let~$g$ be any primitive of~$f$. Then the system~\eqref{eq:main}
can be written as~$\dot \x = -\nabla \E(\x)$
where~$\E: \vertexspace \to \R$ is defined by
\begin{equation} \label{eq:energy}
	\E(\x) = \sum_{(j,k) \in E} g(x_k - x_j).
\end{equation}
\end{lemma}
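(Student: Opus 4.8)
The plan is to verify the identity coordinate by coordinate, showing that $\partial \E / \partial x_i = -\dot x_i$ for every vertex~$i$; this is exactly the assertion $\dot \x = -\nabla \E(\x)$. Since $g$ is a primitive of $f$ it is differentiable with $g' = f$, so $\E$ is differentiable and the chain rule may be applied term by term to the finite sum defining $\E$.

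First I would isolate, for a fixed vertex~$i$, the summands of $\E(\x) = \sum_{(j,k)\in E} g(x_k - x_j)$ that actually involve~$x_i$: a summand $g(x_k - x_j)$ depends on~$x_i$ precisely when $i = k$ or $i = j$, that is, when the oriented edge $(j,k)$ is incident to~$i$. Differentiating, an edge oriented into~$i$, say $(j,i)\in E$, contributes $\partial_{x_i} g(x_i - x_j) = f(x_i - x_j)$, while an edge oriented out of~$i$, say $(i,k)\in E$, contributes $\partial_{x_i} g(x_k - x_i) = -f(x_k - x_i)$.

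The key step is to recombine these two cases into a single, orientation-free sum over neighbors. Using that $f$ is odd, the incoming contribution rewrites as $f(x_i - x_j) = -f(x_j - x_i)$, which now has the same form as the outgoing contribution $-f(x_k - x_i)$. Hence every neighbor $j \in N(i)$ contributes exactly $-f(x_j - x_i)$, independently of how its edge was oriented, giving
\[
	\frac{\partial \E}{\partial x_i} = -\sum_{j \in N(i)} f(x_j - x_i) = -\dot x_i,
\]
as required.

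The only real subtlety is orientation independence: the energy $\E$ is written using an arbitrary choice of orientation~$E$, whereas the vector field in~\eqref{eq:main} is manifestly orientation-free. I expect this to be the main (though mild) obstacle, and it is resolved precisely by the oddness of~$f$, which makes the per-edge chain-rule contributions agree once they are rewritten in terms of $x_j - x_i$. I would also remark that the choice of primitive~$g$ is immaterial, since any two primitives of~$f$ differ by a constant and hence define energies with the same gradient.
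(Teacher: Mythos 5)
Your proof is correct and follows essentially the same route as the paper: differentiate $\E$ coordinate-wise, split the incident edges by orientation, and use the oddness of~$f$ to recombine the contributions into $\partial_i\E(\x) = -\sum_{j\in N(i)} f(x_j - x_i) = -\dot x_i$ (the paper packages the case split into the incidence-matrix entries $b_{i,(j,k)}$, but the computation is identical). In fact your sign bookkeeping is cleaner: the paper's displayed chain ends with ``$=\dot x_i$'' where it should read ``$=-\dot x_i$'', a typo your version avoids.
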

\begin{proof}
We have
\[
	\partial_i \E(\x) = \sum_{(j,k) \in E} b_{i, (j,k)} f(x_k - x_j)
		= - \sum_{j\in N(i)} f(x_j - x_i) = \dot x_i.
\]
where the second equality follows since~$f$ is odd.
\end{proof}

\begin{corollary} \label{cor:vector_form}
Define the function~$\f: \edgespace \to \edgespace$ component-wise as
\[
	\f (y_1,\ldots, y_m) = (f(y_1),\ldots,f(y_m)).
\]
Then the system~\eqref{eq:main} can be written
as~$
	\dot \x = - B \f(B^\trans \x).
$
\end{corollary}
\begin{proof}
Let~$\1 \in \edgespace$ denote vector with all entries equal to~$1$.
Let~$\g: \edgespace \to \edgespace$ be defined component-wise by~$g$.
Then~$\E(\x) = - \1^\trans B g (B^\trans \x)$.
Taking the gradient concludes the proof.
\end{proof}

The zeroth homology group~$\homol_0$
is generated by the indicator vectors~$\d_1$, \ldots, $\d_c \in \vertexspace$
of the connected components~$K_1$, \ldots, $K_c$ of~$G$:
the $i$-th entry of the vector~$\d_j$ is equal to~$1$ if the vertex~$i$ belongs to~$K_j$
and~$0$ otherwise.
The zeroth homology group~$\homol_0$ acts on~$\R^V$ by translation.
The action makes~$\homol_0$ a group of symmetries of the dynamical system:

\begin{proposition} \label{prop:sym}
Consider the orthogonal decomposition~$\vertexspace = \bigcup_{\d\in \homol_0} \d + \homol_0^\perp$.
For every~$\d\in \homol_0$ the affine subspace~$\d + \homol_0^\perp$ is dynamically invariant
and the bijection
\[
	\homol_0^\perp \to \d + \homol_0^\perp, \quad \x \mapsto \d+\x
\]
maps trajectories to trajectories.
\end{proposition}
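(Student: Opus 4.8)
The plan is to work with the vector form $\dot\x = -B\f(B^\trans\x)$ from Corollary~\ref{cor:vector_form} and to exploit the single algebraic identity $B^\trans\d = \0$, valid for every $\d\in\H_0 = \ker(B^\trans)$. This identity is the engine behind both assertions, and essentially no other input is needed.

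First I would establish the \emph{invariance} of each coset $\d + \H_0^\perp$. The right-hand side of the system always lies in the image of $B$, and by the standard relation $\im(B) = \ker(B^\trans)^\perp = \H_0^\perp$ the velocity $-B\f(B^\trans\x)$ belongs to $\H_0^\perp$ at every point $\x$. Since $\H_0^\perp$ is precisely the tangent space of the affine subspace $\d + \H_0^\perp$, the vector field is everywhere tangent to this subspace, so a trajectory starting in it cannot leave. Equivalently, one can invoke Lemma~\ref{lem:constant_of_motion}: the conserved quantities $\d_j^\trans\x = \sum_{i\in K_j} x_i$ are exactly the coordinates of the orthogonal projection onto $\H_0$, and their constancy freezes the $\H_0$-component of any trajectory at the fixed value $\d$.

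Next I would show that $\x \mapsto \d + \x$ carries trajectories to trajectories by proving that the vector field is translation-invariant under $\H_0$. Writing $V(\x) = -B\f(B^\trans\x)$, the computation $B^\trans(\x + \d) = B^\trans\x + B^\trans\d = B^\trans\x$ gives $V(\x + \d) = V(\x)$. Hence if $\x(t)$ solves $\dot\x = V(\x)$, then $\y(t) = \d + \x(t)$ satisfies $\dot\y(t) = \dot\x(t) = V(\x(t)) = V(\y(t))$, so $\y(t)$ is again a trajectory. Applying the same argument to the inverse translation by $-\d \in \H_0$ shows that the map is a bijection between the solution curves of the two cosets, using uniqueness of solutions guaranteed by smoothness of $V$.

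There is no genuine analytic obstacle here; the whole content is the observation that $\H_0 = \ker(B^\trans)$ serves simultaneously as the group of symmetries of the vector field and, through its orthogonal complement $\im B$, as the space transverse to which the dynamics is constant. The only points requiring mild care are the bookkeeping that the orthogonal decomposition $\vertexspace = \H_0 \oplus \H_0^\perp$ produces the stated covering $\bigcup_{\d\in\H_0}(\d + \H_0^\perp)$, and the recognition that tangency of the field to a coset is the correct formulation of invariance; both are routine.
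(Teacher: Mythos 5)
Your proposal is correct and follows essentially the same route as the paper: the paper likewise deduces invariance of each coset $\d + \H_0^\perp$ from the conserved quantities of Lemma~\ref{lem:constant_of_motion} (equivalently, your observation that the velocity lies in $\im B = \H_0^\perp$) and deduces the trajectory-to-trajectory property from translation-invariance of the vector field $-B\f(B^\trans\x)$ under $\H_0 = \ker(B^\trans)$, exactly your computation $B^\trans(\x+\d) = B^\trans\x$. Your write-up merely spells out the details that the paper's two-line proof leaves implicit.
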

\begin{proof}
By Lemma~\ref{lem:constant_of_motion} each subspace~$\d + \homol_0^\perp$
is dynamically invariant.
By Corollary~\ref{cor:vector_form} the vector field
is invariant under translations by elements of~$\homol_0 = \ker B^\trans$.
\end{proof}

Proposition~\ref{prop:sym} suggests to consider quotient dynamics.
Moreover, it shows that quotient dynamics can by obtained
by restricting the system to any leaf~$\d + \homol_0^\perp$ of the foliation.
However, a canonical choice for representing quotient dynamics is obtained
by transforming the system from vertex space to edge space:
letting~$\y = B^\trans \x$ gives
\begin{equation} \label{eq:main_edgespace}
    \begin{cases} 
    \dot \y = - B^\trans B \f (\y)\\
    \y \in \homol_1^\perp.
    \end{cases}
\end{equation}
The system~\eqref{eq:main_edgespace} represents the dynamics of~\eqref{eq:main} up to symmetry.
The condition~$\y \in \homol_1^\perp$ guarantees that~$\y = B^\trans \x$ for some~$\x$.
As independently noticed in~\cite{homs2021nonlinear},
the advantage of working in edge space is a simple expression for the set of
equilibria:
\begin{equation} \label{eq:intersection}
	\f^{-1}(\homol_1) \cap \homol_1^\perp.
\end{equation}

The first homology group~$\homol_1$
is generated by the oriented cycles of~$G$,
encoded as vectors in~$\edgespace$
with entries~$+1$,~$-1$, or~$0$, according to whether an oriented edge
agrees with the orientation of the cycle, has opposite orientation, or does
not belong to the cycle, respectively.
Therefore, equation~\eqref{eq:intersection} suggests a connection between equilibria
of the system and cycles in the underlying graph.
This idea also appears in~\cite{delabays2017multistability, sclosa2021completely}
and will be further explored in the following section.

The decomposition of a connected graph by the set of its cut-vertices,
known as~\emph{block decomposition} or~\emph{block tree decomposition},
is a basic concept in graph theory~\cite{engelking1978dimension, tutte1966connectivity}.
A \emph{block} of a graph is a subgraph which is connected, has no cut vertices,
and is maximal with these properties.
A key feature of the system~\eqref{eq:main} is that equilibria of a graph can be decomposed
into the equilibria of the blocks:

\begin{proposition} \label{prop:blocks}
Let
$
	G = G_1 \cup \cdots \cup G_p
$
be the block composition of a connected graph~$G$.
Then a point~$\x \in \R^{V(G)}$ is an equilibrium of the graph~$G$
if and only if for every~$k=1,\ldots,p$ the restriction~$\x\vert_{V(G_k)} \in \R^{V(G_k)}$
is an equilibrium of the graph~$G_k$.
\end{proposition}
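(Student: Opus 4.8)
The plan is to prove both implications by exploiting the fact that, in a block decomposition of a connected graph, distinct blocks share no edge: every edge of~$G$ lies in exactly one block~$G_k$, and two blocks meet in at most one cut vertex. Consequently, for each vertex~$i$ the neighbor set~$N(i)$ splits according to the block containing each incident edge; writing~$S_k(i)$ for the partial sum~$\sum_j f(x_j - x_i)$ taken over the neighbors~$j$ of~$i$ inside the block~$G_k$, we obtain~$\dot x_i = \sum_{k\,:\, i\in V(G_k)} S_k(i)$, where each~$S_k(i)$ is precisely the time derivative of~$x_i$ in the system on~$G_k$ evaluated at~$\x\vert_{V(G_k)}$. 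The implication ($\Leftarrow$) is then immediate: if~$\x\vert_{V(G_k)}$ is an equilibrium of~$G_k$ for every~$k$, then~$S_k(i)=0$ whenever~$i\in V(G_k)$, so~$\dot x_i = 0$ for every~$i$.

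For the converse~($\Rightarrow$), suppose~$\x$ is an equilibrium of~$G$. First I would dispose of the non-cut vertices: if~$i$ lies in a single block~$G_k$, then~$0 = \dot x_i = S_k(i)$ already gives~$S_k(i)=0$. The difficulty is concentrated entirely at the cut vertices, where~$\dot x_i = \sum_{k\ni i} S_k(i) = 0$ only asserts that several block-contributions sum to zero, not that each vanishes. To separate them I would bring in two further families of relations. On one hand, the computation of Lemma~\ref{lem:constant_of_motion} applied to each block~$G_k$ (a connected graph) yields~$\sum_{i\in V(G_k)} S_k(i) = 0$; since~$S_k(i)=0$ at the non-cut vertices of~$G_k$, this reduces to~$\sum_v S_k(v) = 0$, the sum running over the cut vertices of~$G$ contained in~$G_k$. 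On the other hand, the vertex equations give~$\sum_{k\ni v} S_k(v) = 0$ at each cut vertex~$v$.

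The main obstacle, and the crux of the argument, is to deduce from these two families of linear relations that every~$S_k(v)$ vanishes individually. This is exactly where the tree structure of the block decomposition enters. I would interpret the numbers~$S_k(v)$ as labels on the edges of the block tree~$T$, the bipartite graph whose nodes are the blocks and the cut vertices, with an edge joining~$G_k$ to~$v$ precisely when~$v\in V(G_k)$. The two relations above say that the sum of the labels incident to each node of~$T$, whether a block node or a cut-vertex node, equals zero. Since~$G$ is connected, $T$ is a tree; it therefore has a leaf, whose unique incident edge must carry label~$0$, and deleting that leaf leaves a smaller tree in which the same balance condition persists. Peeling off leaves inductively forces every label~$S_k(v)$ to be zero. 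Combined with the non-cut case, this gives~$S_k(i)=0$ for every~$k$ and every~$i\in V(G_k)$, that is,~$\x\vert_{V(G_k)}$ is an equilibrium of each block. Equivalently, one may phrase this last step as the vanishing of balanced edge-labelings on a graph with~$\H_1(T)=0$, or recast the whole converse as an induction that removes one leaf block of~$T$ at each stage.
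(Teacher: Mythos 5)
Your argument is correct, and it is worth noting that the paper itself contains no proof of Proposition~\ref{prop:blocks}: it simply cites \cite{canale2007gluing} for the case $f(x)=\sin(x)$ and \cite[Section 3.5]{homs2021nonlinear} for the general case, so your proposal supplies a self-contained argument where the paper defers to the literature. The two structural inputs you isolate are exactly the right ones: (i) every edge of~$G$ lies in a unique block, so each~$\dot x_i$ splits as a sum of block contributions~$S_k(i)$, and oddness of~$f$ (the computation of Lemma~\ref{lem:constant_of_motion} applied to the connected graph~$G_k$) gives the blockwise conservation law $\sum_{i\in V(G_k)} S_k(i)=0$ \emph{identically}, not just at equilibria; and (ii) the block--cut incidence graph of a connected graph is a tree, so the leaf-peeling induction forces every label~$S_k(v)$ to vanish from the unsigned balance conditions at block nodes and at cut-vertex nodes. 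The backward implication is indeed immediate, and your reduction of the forward implication to cut vertices is where the real content lies. A side benefit of your route is that it makes transparent how little is needed: only that~$f$ is odd (analyticity, and even smoothness, play no role), so the proposition holds verbatim for any odd continuous coupling. One small caveat: your closing reformulation via ``vanishing of balanced edge-labelings on a graph with~$\H_1(T)=0$'' should be stated with care, since your balance conditions are \emph{unsigned} sums, whereas~$\H_1$ in the paper is the kernel of the \emph{signed} incidence matrix; the two notions agree here only because the block--cut tree is bipartite, so one may flip signs on one side of the bipartition. The leaf-peeling argument itself needs no such identification and is airtight as written.
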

\begin{proof}
See~\cite{canale2007gluing} for~$f(x)=\sin(x)$
and~\cite[Section 3.5]{homs2021nonlinear} for the general case.
\end{proof}


\section{The Set of Equilibria} \label{sec:set_of_equilibria}
In this section we analyze the structure of the set of equilibria, in relation
to the combinatorial properties of the underlying graph and the shape of the coupling function.
As shown in the previous section, the dynamical system~\eqref{eq:main}, which takes
place in the vertex space~$\vertexspace$, can be represented in edge space~$\edgespace$.
This has the advantage of getting rid of translational symmetry.
We identify the set of equilibria up to symmetry with the set of equilibria in edge space.

\begin{definition} \label{def:set_of_equilibria}
Let~$G$ be a finite graph and let~$f\in \anal$. Define
\[
	\equilibria = \f^{-1}(\homol_1(G)) \cap \homol_1(G)^\perp.
\]
\end{definition}

Every point of~$\equilibria$ is an equilibrium in~$\edgespace$
and corresponds to a $c$-dimensional affine set of
of equivalent equilibria in~$\vertexspace$.

\subsection{Real-Analytic Sets}
The set~$\equilibria$ is the solution set of a finite system of real-analytic equations.
A set with this property is known as \emph{real-analytic set}~\cite{massey2007notes, whitney1972complex, narasimhan2006introduction}, or simply \emph{analytic sets}.
Let us recall some known results.

Let~$X$ be an analytic set.
Then in a neighborhood of every point~$x\in X$
the set~$X$ is union of finitely many
smooth manifolds~\cite[Theorem 1.2.10]{trotman2020stratification}.
These manifold are not necessarily closed and not necessarily of the same dimension.
For example, the subset of~$\R^2$ given by~$xy=0$ is union of five manifolds:
four smooth curves and a point.

A point~$x\in X$ is called \emph{regular}
if there is a neighborhood of~$x$ in~$X$ which is locally homeomorphic
to a smooth manifold. A point is called~\emph{singular} otherwise.
The \emph{dimension} of~$X$, denoted by~$\dim X$, is the maximum dimension of a manifold
contained in~$X$.
Singular points form a nowhere dense subset of~$X$~\cite[Theorem 5.14]{massey2007notes}.

Notice how these general topological facts agree with
the sets of equilibria found in Example~\ref{ex:sine_complete},
Example~\ref{ex:poly_cycle} and Example~\ref{ex:skew_book}.


\subsection{Dimension of the Set of Equilibria}
We will need the following result:

\begin{lemma} \label{lem:dimension}
Let~$G$ be a finite graph and let~$f\in \anal$. Then
\begin{equation} \label{eq:lem_dimension}
	\dim(\equilibria) =
	\dim \p{\f^{-1}(\homol_1) \cap \homol_1^\perp} \leq \dim \p{\homol_1\cap \f(\homol_1^\perp)}.
\end{equation}
\end{lemma}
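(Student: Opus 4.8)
The plan is to push $\equilibria = \f^{-1}(\H_1)\cap\H_1^\perp$ forward through the map~$\f$, observe that the image lands inside the right-hand set, and then argue that~$\f$ cannot decrease the dimension of~$\equilibria$. Concretely, I would first record the inclusion
\[
	\f\p{\f^{-1}(\H_1)\cap\H_1^\perp} \subseteq \H_1\cap\f(\H_1^\perp).
\]
Indeed, if~$\y$ lies in the left-hand set then~$\y\in\H_1^\perp$, so~$\f(\y)\in\f(\H_1^\perp)$, and~$\f(\y)\in\H_1$ by definition of the preimage. Since dimension is monotone under inclusion, it then suffices to prove~$\dim\equilibria \leq \dim\f(\equilibria)$, i.e.\ that~$\f$ does not lower the dimension of~$\equilibria$.

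The crucial observation is that~$\f$ has discrete fibers. If~$\f(\y)=\f(\y')=\w$, then~$f(y_i)=f(y_i')=w_i$ for every~$i$, so the fiber over~$\w$ is the product~$\prod_i f^{-1}(w_i)$. Because~$f\in\anal$ is non-constant and analytic, each function~$t\mapsto f(t)-w_i$ has only isolated zeros, so each level set~$f^{-1}(w_i)\subseteq\R$ is discrete; hence every fiber of~$\f$ is a $0$-dimensional subset of~$\edgespace$. This is precisely where the hypothesis~$f\in\anal$ enters.

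To turn discrete fibers into the dimension inequality, I would use that~$\equilibria$ is an analytic set and therefore, by the local decomposition quoted above from~\cite{trotman2020stratification}, a locally finite union of smooth manifolds; choose a stratum~$M\subseteq\equilibria$ of top dimension~$d=\dim\equilibria$. The restriction~$\f\vert_M$ is real-analytic, so its rank attains a maximal value~$r$ on a dense open subset of~$M$. On the open set where the rank is constantly~$r$, the rank theorem presents~$\f\vert_M$ locally as a projection whose fibers are submanifolds of dimension~$d-r$; since these fibers lie inside the discrete fibers of~$\f$, we must have~$d-r=0$. Thus~$\f\vert_M$ is an immersion at a generic point, its image contains a $d$-dimensional submanifold, and~$\dim\f(\equilibria)\geq d=\dim\equilibria$. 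Combined with the inclusion of the first paragraph this gives~\eqref{eq:lem_dimension}.

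The main obstacle is exactly this last passage from zero-dimensional fibers to the dimension bound: one must make sure the rank argument is carried out on a genuine top-dimensional manifold piece of the analytic set~$\equilibria$ and that the maximal-rank locus of the analytic map~$\f\vert_M$ behaves as claimed. This is standard for analytic maps and sets, and it is the only point requiring the analytic structure of both~$\equilibria$ and~$\f$; the remaining steps are the elementary inclusion and the monotonicity of dimension.
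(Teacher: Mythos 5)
Your proof is correct, but it takes a genuinely different route from the paper's. You work on the image side: picking a top-dimensional manifold $M\subseteq\equilibria$ (which exists by the paper's definition of dimension of an analytic set), you apply the constant rank theorem on the open set where~$\f\vert_M$ has maximal rank, and the discreteness of the fibers of~$\f$ forces that rank to be full; hence~$\f\vert_M$ is locally an embedding there, and~$\f(\equilibria)\subseteq \H_1\cap\f(\H_1^\perp)$ contains a $d$-dimensional submanifold, giving the inequality by monotonicity. The paper instead works on the domain side: since the critical points of~$f$ are isolated, the space~$\edgespace$ decomposes into countably many compact boxes~$V_k$ on which~$\f$ restricts to a homeomorphism; homeomorphism-invariance of dimension handles each piece, and the countable sum theorem for closed sets glues the pieces back together. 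Both arguments rest on the same one-dimensional fact (a non-constant analytic~$f$ has discrete level sets and isolated critical points), but yours trades point-set dimension theory for differential topology: it needs the local manifold structure of the analytic set~$\equilibria$ (or the paper's definition of~$\dim$ as the maximal dimension of a contained manifold), whereas the paper's argument never touches any stratification and runs purely in topological dimension theory. Two minor points in your write-up: density of the maximal-rank locus is not needed, and for a merely smooth stratum it is not automatic --- openness and non-emptiness of that locus (from lower semicontinuity of the rank) suffice; likewise you do not need~$\f\vert_M$ to be real-analytic, since the smooth constant rank theorem is enough, so no analyticity of the stratum has to be justified. As a small bonus, your approach yields slightly more than the statement: $\f$ restricted to a top-dimensional stratum of~$\equilibria$ is generically an immersion.
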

\begin{proof}
Since~$f$ is analytic and not constant, the critical points of~$f$ are isolated.
Therefore the space~$\edgespace \simeq \R^m$ is union of countably many
compact sets~$V_k$ of the form
\[
	V_k = \prod_{j=1}^m [\alpha_{k,j},\beta_{k,j}]
\]
such that the restriction
$
	\f\vert_{V_k}: V_k \to \f(V_k)
$
is an homeomorphism. We claim that
\begin{align}
	\dim(\f^{-1}(\homol_1) \cap \homol_1^\perp)
	& = \sup_k \dim \p{\f\vert_{V_k}^{-1}(\homol_1) \cap V_k \cap \homol_1^\perp}
		\label{lem:dimension_1} \\
	& = \sup_k \dim \p{\homol_1 \cap \f\vert_{V_k}(V_k \cap \homol_1^\perp)} \label{lem:dimension_2} \\
	& \leq \sup_k \dim \p{\homol_1 \cap \f\vert_{V_k}(V_k) \cap \f(\homol_1^\perp)} \\
	& = \dim \p{\homol_1\cap \f(\homol_1^\perp)} \label{lem:dimension_3}.
\end{align}
Notice that~$V_k \cap \f^{-1}(\homol_1) \cap \homol_1^\perp$ is closed in~$\f^{-1}(\homol_1) \cap \homol_1^\perp$
and~$\f\vert_{V_k}(V_k) \cap \homol_1\cap \f(\homol_1^\perp)$ is compact,
thus closed in~$\homol_1\cap \f(\homol_1^\perp)$.
By~\cite[Theorem~5.4]{keesling1986hausdorff} the dimension of a countable union of closed sets
is the supremum of the dimensions. This proves~\eqref{lem:dimension_1} and~\eqref{lem:dimension_3}.
Equation~\eqref{lem:dimension_2} follows
since~$\f\vert_{V_k}: V_k\cap \homol_1^\perp \to \f\vert_{V_k}(V_k \cap \homol_1^\perp)$
is an homeomorphism.
\end{proof}

As a consequence, we obtain that the dimension of the set of equilibria~$\equilibria$
can be bounded in terms of number cycles in the underlying graph:

\begin{theorem} \label{thm:dimension}
Let~$G$ be a finite graph and let~$f\in \anal$. Then
\begin{equation} \label{eq:thm_dimension}
	\dim \equilibria \leq \dim \homol_1(G).
\end{equation}
\end{theorem}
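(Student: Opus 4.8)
The plan is to apply Lemma~\ref{lem:dimension} and then reduce the right-hand side~$\dim\p{\H_1\cap\f(\H_1^\perp)}$ to~$\dim\H_1$. By the lemma we immediately have
\[
	\dim\equilibria \leq \dim\p{\H_1\cap\f(\H_1^\perp)},
\]
so it suffices to show that the intersection~$\H_1\cap\f(\H_1^\perp)$ has dimension at most~$\dim\H_1$. The key observation is that this set is contained in~$\H_1$, which is a linear subspace of~$\edgespace$ of dimension exactly~$\dim\H_1$. Since dimension is monotone under inclusion for analytic (indeed for arbitrary) subsets, any subset of~$\H_1$ has dimension at most~$\dim\H_1$, and the result follows.

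More carefully, I would argue as follows. First I would note that~$\H_1\cap\f(\H_1^\perp)\subseteq\H_1$ trivially, since it is an intersection with~$\H_1$. The subspace~$\H_1=\ker(B)$ is a finite-dimensional real vector space, hence a smooth manifold of dimension~$\dim\H_1$, and its topological (covering) dimension coincides with its vector-space dimension. A subset of a space of covering dimension~$d$ has covering dimension at most~$d$; this is the standard monotonicity of dimension. Therefore
\[
	\dim\p{\H_1\cap\f(\H_1^\perp)} \leq \dim\H_1.
\]
Chaining this with the inequality from Lemma~\ref{lem:dimension} yields~\eqref{eq:thm_dimension}.

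The argument is short because the hard analytic content has already been absorbed into Lemma~\ref{lem:dimension}: the passage from~$\f^{-1}(\H_1)$ to~$\f(\H_1^\perp)$, which trades a preimage for an image and relies on decomposing~$\edgespace$ into countably many boxes on which~$\f$ restricts to a homeomorphism, together with the countable-stability of Hausdorff dimension. Once that lemma is in hand, the only remaining step is the elementary observation that~$\H_1\cap\f(\H_1^\perp)$ sits inside the linear space~$\H_1$.

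The main obstacle I anticipate is purely bookkeeping about which notion of dimension is in play and ensuring monotonicity holds for it. The excerpt works with analytic sets and their dimension as the maximal dimension of a contained manifold, but Lemma~\ref{lem:dimension} invokes a Hausdorff/covering dimension result; I would want to confirm these agree on the sets at hand, or simply phrase the final monotonicity step in whatever dimension Lemma~\ref{lem:dimension} outputs. Provided the dimension is monotone under inclusion—which holds for all the standard notions (covering, Hausdorff, and the analytic-set dimension)—the inclusion~$\H_1\cap\f(\H_1^\perp)\subseteq\H_1$ closes the argument without further work.
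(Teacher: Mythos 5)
Your proposal is correct and follows essentially the same route as the paper's own proof: apply Lemma~\ref{lem:dimension}, then use the inclusion~$\H_1\cap\f(\H_1^\perp)\subseteq\H_1$ together with monotonicity of dimension under inclusion. Your additional care about which notion of dimension is in play (covering/Hausdorff versus analytic-set dimension) is reasonable but not an obstacle, since all of these agree on linear subspaces and are monotone; the paper treats this step exactly as briefly as you do.
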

\begin{proof}
By definition~$\equilibria = \f^{-1}(\homol_1(G)) \cap \homol_1(G)^\perp$.
Lemma~\ref{lem:dimension} implies
\[
	\dim \equilibria \leq \dim \p{\homol_1(G)\cap \f(\homol_1(G)^\perp)}.
\]
Since~$\homol_1(G)\cap \f(\homol_1(G)^\perp) \subseteq \homol_1(G)$ and
dimension is monotone with
respect to inclusion, we conclude that~$\dim \equilibria\leq \dim \homol_1(G)$.
\end{proof}

As a corollary, we obtain the following upper bounds:

\begin{corollary}
Let~$G$ be a finite graph and let~$f\in \anal$. Then
\[
	\dim \equilibria \leq n-c, \quad
	\dim \equilibria \leq \floor{\frac{m}{2}}, \quad
	\dim \equilibria \leq m-n+c.
\]
\end{corollary}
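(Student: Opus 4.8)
The plan is to reduce all three inequalities to the single bound $\dim \equilibria \leq \dim \H_1(G)$ already supplied by Theorem~\ref{thm:dimension}, supplemented by the elementary fact that $\equilibria$ is contained in the ambient subspace $\H_1(G)^\perp$. The only genuinely new ingredient is the classical dimension formula for the homology groups, so I would establish that first and let the rest follow by linear algebra.

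First I would compute $\rank(B)$. Since $\H_0 = \ker(B^\trans)$ is spanned by the $c$ linearly independent indicator vectors $\d_1,\ldots,\d_c$ of the connected components, we have $\dim \H_0 = c$, and hence by rank--nullity $\rank(B) = \rank(B^\trans) = n - c$. Applying rank--nullity to $B$ itself on $\edgespace \simeq \R^m$ then gives
\[
	\dim \H_1(G) = \dim \ker(B) = m - \rank(B) = m - n + c.
\]
Combined with Theorem~\ref{thm:dimension}, this immediately yields the third bound $\dim \equilibria \leq m-n+c$.

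Next I would extract the first bound from the ambient space rather than from the image of $\f$. By Definition~\ref{def:set_of_equilibria} we have $\equilibria \subseteq \H_1(G)^\perp$, and since dimension is monotone under inclusion,
\[
	\dim \equilibria \leq \dim \H_1(G)^\perp = m - \dim \H_1(G) = m - (m-n+c) = n-c,
\]
which is the first bound. The second bound then follows formally from the other two: the quantities $n-c$ and $m-n+c$ sum to $m$, so their minimum is at most $\floor{m/2}$, whence $\dim \equilibria \leq \min(n-c,\, m-n+c) \leq \floor{m/2}$.

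I do not expect a serious obstacle here, as everything is linear algebra once the formula $\dim \H_1(G) = m-n+c$ is in hand. The one point worth flagging is conceptual rather than technical: the sharpest of the three inequalities does not come from Theorem~\ref{thm:dimension} in isolation, but from combining the containment $\equilibria \subseteq \H_1(G)^\perp$ with the containment $\equilibria \subseteq \f^{-1}(\H_1(G))$. That is, one must exploit \emph{both} the source constraint and the target constraint defining $\equilibria$ to obtain the full set of bounds.
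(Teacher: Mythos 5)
Your proof is correct and follows essentially the same route as the paper: the bound $n-c$ from the containment $\equilibria \subseteq \H_1(G)^\perp$, the bound $m-n+c$ from Theorem~\ref{thm:dimension} together with $\dim \H_1(G) = m-n+c$, and the bound $\floor{m/2}$ from the fact that the first two quantities sum to $m$. The only difference is cosmetic: you derive $\dim \H_1(G) = m-n+c$ by rank--nullity, whereas the paper cites it from Diestel.
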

\begin{proof}
The upper bound~$n-c$ follows from~$\equilibria \subseteq \homol_1(G)^\perp$.
By~\cite[Theorem 1.9.5]{diestel} we have~$\dim \homol_1(G) = m-n+c$.
Theorem~\ref{thm:dimension} implies~$\dim \equilibria\leq m-n+c$.
Since the positive integers~$n-c$ and~$m-n+c$ have sum~$m$,
at lest one is not larger than~$m/2$, thus~$\dim \equilibria \leq m/2$.
\end{proof}

For planar graphs we obtain:

\begin{corollary} \label{cor:equilibria_planar}
Let~$G$ be a planar graph with~$r$ bounded planar regions and let~$f\in \anal$. Then
$
	\dim \equilibria \leq r.
$
\end{corollary}
\begin{proof}
By Euler's formula~$n-m+r=1$. Therefore~$\dim \homol_1 = m-n+c=r$.
\end{proof}

The upper bound~$\dim \equilibria \leq \dim \homol_1(G)$ is optimal
for trees and cycle graphs, for which~$\dim \homol_1(G)\leq 1$,
but can be unboundedly bad for~$\homol_1(G)\geq 2$:
We will see that on the family of wheel graphs
the difference~$\dim \homol_1(G) - \equilibria$ is unbounded.
The reason is that on graphs containing more than one cycle,
not only not the number of cycles is relevant,
but also how cycles intersect.
To explain this phenomenon, we introduce the following combinatorial notion:

\begin{definition} \label{def:snake}
A \emph{cycle chain of length}~$p$ is a sequence of cycle graphs~$G_1$, \ldots, $G_p$
such that~$\abs{E(G_j) \cap E(G_{k})}=1$ if~$\abs{j-k}=1$
and~$E(G_j)\cap E(G_k) = \emptyset$ if~$\abs{j-k}>1$.
\end{definition}

The proof of the following theorem requires a technical hypothesis:
the coupling function~$f$ is either periodic or has \emph{finite fibers},
that is, for every~$x\in \R$ the preimage~$f^{-1}(\{x\})$ is a finite set.
Note that every non-constant polynomial has finite fibers.

\begin{theorem} \label{thm:snake}
Let~$G$ be a finite graph and
let~$\cc(G)$ denote the maximum length of a cycle chain contained in~$G$.
Suppose that~$f\in \anal$ is either periodic or has finite fibers. Then
\[
	\dim \equilibria \leq \dim \homol_1(G) - \cc(G) + 1.
\]
\end{theorem}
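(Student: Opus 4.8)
The plan is to reduce the statement, via Lemma~\ref{lem:dimension}, to an estimate on the image $\H_1\cap\f(\H_1^\perp)$, and then to read off from the combinatorics of the cycle chain a tridiagonal block inside the relevant Jacobian whose rank is forced to be $p-1$. Writing $p=\cc(G)$, it suffices by Lemma~\ref{lem:dimension} to prove
\[
	\dim\p{\H_1(G)\cap\f(\H_1(G)^\perp)} \leq \dim\H_1(G) - p + 1 .
\]
Fix a cycle chain $G_1,\dots,G_p$ realizing $\cc(G)$, and let $e_j$ be the unique edge shared by $G_j$ and $G_{j+1}$. Since consecutive cycles meet in an edge while non-consecutive cycles are edge-disjoint, each $G_j$ has at least one private edge, so the oriented cycle vectors $\gamma_1,\dots,\gamma_p\in\edgespace$ are linearly independent in $\H_1$; I complete them to a basis $\gamma_1,\dots,\gamma_p,\delta_1,\dots,\delta_q$ of $\H_1$, where $q=\dim\H_1-p$.

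Next I would follow the proof of Lemma~\ref{lem:dimension} and decompose $\edgespace$ into the boxes $V_k$ on whose interior $\f$ is a diffeomorphism, with scalar local inverses $g_e=(f\vert_{V_k})^{-1}$. On such a box a point $\z\in\H_1\cap\f(\H_1^\perp)$ comes from $\y\in\H_1^\perp$ with $y_e=g_e(z_e)$, and the condition $\y\in\H_1^\perp$ reads $\langle\gamma_j,\y\rangle=0$, that is,
\[
	\Phi_j(\z) = \sum_{e\in E(G_j)} (\gamma_j)_e\, g_e(z_e) = 0,
	\qquad j=1,\dots,p,
\]
together with the analogous equations coming from the $\delta_i$. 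Parametrizing $\H_1$ by the fluxes $(\lambda_1,\dots,\lambda_p,\mu_1,\dots,\mu_q)$ dual to the chosen basis, one has $\partial z_e/\partial\lambda_k=(\gamma_k)_e$, hence
\[
	\frac{\partial\Phi_j}{\partial\lambda_k}
	= \sum_{e} (\gamma_j)_e(\gamma_k)_e\, g_e'(z_e).
\]
Since $(\gamma_j)_e(\gamma_k)_e\neq 0$ only when $e\in E(G_j)\cap E(G_k)$, this $p\times p$ block is tridiagonal: the $(j,k)$ entry vanishes for $\abs{j-k}\geq 2$, and for $k=j+1$ it equals $\pm g_{e_j}'(z_{e_j})=\pm 1/f'(y_{e_j})$, which is nonzero on the interior of $V_k$. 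The submatrix on rows $1,\dots,p-1$ and columns $2,\dots,p$ is therefore lower triangular with nonzero diagonal, so the full Jacobian has rank at least $p-1$. Consequently, on each box interior the solution set has codimension at least $p-1$ in $\H_1$, hence local dimension at most $\dim\H_1-p+1$; assembling the countably many boxes through the dimension-of-countable-union argument of Lemma~\ref{lem:dimension} gives the claimed bound.

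The main obstacle is the degenerate locus where a shared edge sits at a critical point of $f$, so that $f'(y_{e_j})=0$ and the tridiagonal off-diagonal entry disappears and the rank argument can break down. This locus is exactly the boundary of the boxes $V_k$, and it is here that the hypothesis that $f$ is periodic or has finite fibers is used: it forces the set of critical values of $f$ to be tame (locally finite), so that the degenerate locus is a locally finite union of slices $\{\,y_{e_j}=c\,\}$ along which the edge $e_j$ is frozen. Freezing $e_j$ decouples the chain into the two shorter chains $G_1,\dots,G_j$ and $G_{j+1},\dots,G_p$, and an induction on $p$ then controls the dimension on each slice, keeping it within $\dim\H_1-p+1$. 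I expect verifying that this stratification is genuinely codimension-increasing, rather than merely where the clean Jacobian estimate fails, to be the delicate point of the argument.
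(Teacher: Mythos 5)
Your interior argument is sound, and it takes a genuinely different route from the paper. The tridiagonal block $\partial\Phi_j/\partial\lambda_k=\sum_e(\gamma_j)_e(\gamma_k)_e\,g_e'(z_e)$, whose $(j,j+1)$ entries equal $\pm 1/f'(y_{e_j})\neq 0$ on box interiors, does force codimension $p-1$ there. The paper's proof uses no derivatives at all: it writes $\f(\y)=z_1\gamma_1+\cdots+z_p\gamma_p+\cdots$, uses orthogonality to $\gamma_{k+1}$ and the chain structure to show that, once $z_{k+1},\ldots,z_{\dim\H_1}$ are fixed, $f(y_{e_k})$ and hence $z_k$ range over a \emph{finite} set --- this is exactly where the finite-fiber hypothesis enters --- and then concludes by a dimension-theory result for maps whose point preimages are finite. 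So the two proofs hinge on different mechanisms: yours on transversality where $f$ is a local diffeomorphism, the paper's on finiteness of fibers with no regularity needed anywhere.

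The gap is the degenerate locus, and it is not a removable formality. First, your assembly step does not go through as stated: the countable-union theorem invoked in Lemma~\ref{lem:dimension} applies to \emph{closed} subsets, and ``solution set intersected with a box interior'' is not closed; the closed pieces are the intersections with closed boxes, which contain the faces where some chain-cycle edge sits at a critical point of $f$. Hence the boundary analysis must be carried out before any bound on $\dim\equilibria$ follows. Second, your sketch of that analysis misattributes the role of the hypothesis and underestimates the work: local finiteness of critical points follows from analyticity alone (the paper already notes critical points of $f\in\anal$ are isolated), so periodicity/finite fibers plays no role in your stratification --- a warning sign, since the paper's proof needs it essentially. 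Moreover, the naive accounting fails: if a private edge of $G_1$ is frozen at a critical point, $\Phi_1$ has no ambient differential there, the remaining rows give ambient rank $p-2$, and restriction to the codimension-one slice can drop this to $p-3$, yielding only $\dim\H_1-p+2$. To recover $\dim\H_1-p+1$ one must re-derive the triangular structure \emph{inside} each slice, choosing as column directions cycle vectors tangent to the slice (for instance $\gamma_i$ for cycles avoiding the frozen edge, or $\gamma_j\pm\gamma_{j+1}$ when a shared edge $e_j$ is frozen), and must verify the count for every subset of simultaneously frozen edges, keeping in mind that the frozen-edge functionals need not be linearly independent on $\H_1$, so the slice codimension is not the number of frozen edges. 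This stratified rank analysis is the actual content of the boundary case; you explicitly defer it, so the proposal is not yet a proof --- and if it were completed, it would in fact prove the statement \emph{without} the periodic/finite-fiber hypothesis, which is a claim you should not make implicitly without checking it.
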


\begin{proof}
We write the proof in the case of finite fibers.
The proof can be adapted to the case of periodic~$f$
by reducing the space modulo periodicity and noting that this reduction
preserved dimensions.

Let~$p=\cc(G)$. By hypothesis, the graph~$G$ contains~$p$ cycle subgraphs
forming a cycle chain. Fix an orientation of each of these cycles
and let~$\ggamma_1,\ldots,\ggamma_p$ be the corresponding vectors in~$\edgespace$.
Recall that cycles are encoded as vectors as follows:
the coefficient~$\gamma_{k,e}$ is equal to~$+1,-1,0$ according to whether
the oriented edge~$e$ agrees with the orientation of the cycle, has the opposite orientation
or does not belong to the cycle.
By abuse of notation we denote by~$E(\ggamma_k)$ the set of edges of the cycle
encoded by~$\ggamma_k$.
By definition of cycle chain it follows that the vectors~$\ggamma_1,\ldots,\ggamma_p$
are linearly independent. Complete them to a
basis~$\ggamma_1,\ldots,\ggamma_p, \ddelta_{p+1}, \ldots, \ddelta_{\dim \homol_1}$ of~$\homol_1$
whose elements are oriented cycles.

Let~$\y\in \equilibria$. Since~$\f(\y) \in \homol_1$ there
are~$z_1,\ldots,z_{\dim \homol_1} \in \R$ such that
\begin{equation} \label{eq:snake_fy}
	\f(\y) = z_1 \ggamma_1 + \ldots z_p \ggamma_p
		+ z_{p+1} \ddelta_{p+1} \ldots + z_{\dim \homol_1} \ddelta_{\dim \homol_1}.
\end{equation}
The idea of the proof is showing that for every~$k<p$ the coefficient~$z_k$ can be expressed
in terms of~$z_{k+1},\ldots,z_{\dim \homol_1}$.

Fix~$k<p$. Let~$e$ be the only edge
in the intersection~$E(\ggamma_k) \cap E(\ggamma_{k+1})$.
Since~$\ggamma_{k-1}$ and~$\ggamma_{k+1}$ are the only cycles in the chain
intersecting~$\ggamma_k$, from~\eqref{eq:snake_fy} it follows that
\begin{equation}
f(y_e) = z_k \gamma_{k,e} + z_{k+1} \gamma_{k+1,e}
	+ \sum_{i=p+1}^{\dim \homol_1} z_{i} \delta_{i,e}, \label{eq:snake_fe}
\end{equation}
Since~$e\in E(\ggamma_k)$ we have~$\gamma_{k,e} \neq 0$,
thus~$z_k$ can be expressed as a linear combination of~$z_{k+1},\ldots,z_{\dim \homol_1}$
and~$f(y_e)$.

We would like to express~$f(y_e)$ as a function of~$z_{k+1},\ldots,z_{\dim \homol_1}$.
In general this is impossible since~$f$ might not be invertible.
However, since~$f$ has finite fibers, we can show that for
fixed~$z_{k+1},\ldots,z_{\dim \homol_1}$ there are only
finitely many possibilities for~$f(y_e)$.
Since~$\y\in \homol_1^\perp$, in particular~$\y^\trans \ggamma_{k+1} = 0$, thus
\begin{equation} \label{eq:snake_intersect}
	 f(y_{e}) = f\p{- \gamma_{k+1,e}^{-1}
		\sum_{e' \in E(\ggamma_{k+1})\setminus \{e\}} \gamma_{k+1,e'} y_{e'}}.
\end{equation}
From~\eqref{eq:snake_fy} it follows that
for all~$e' \in E(\ggamma_{k+1})\setminus \{e\}$ the quantity~$f(y_{e'})$
is a linear combination of~$z_{k+1}$, \ldots, $z_{\dim \homol_1}$ with coefficients~$0,+1,-1$.
There are finitely many such linear combinations. Therefore~$y_{e'}$ lies in the $f$-preimage 
of a finite set, which by hypothesis is a finite set.
Therefore, from~\eqref{eq:snake_intersect} we
conclude that for fixed~$z_{k+1},\ldots,z_{\dim \homol_1}$
there are only finitely many choices for~$f(y_e)$, and thus from~\eqref{eq:snake_fe}
only finitely many choices for~$z_k$.

Let~$W \subset \edgespace$ be the vector subspace generated
by~$\ggamma_p,\ddelta_{p+1},\ldots,\ddelta_{\dim \homol_1}$.
Since the previous argument applies to every~$k<p$, it follows that the projection map
\begin{align*}
	\pi: &\homol_1\cap \f(\homol_1^\perp) \to W,
		\ (z_1,\ldots,z_{\dim \homol_1}) \mapsto (z_p,z_{p+1},\ldots,z_{\dim \homol_1})
\end{align*}
has finite fibers.
By~\cite[Proposition 4.11]{pears1975dimension} it follows that
the dimension of the domain~$\dim(\homol_1\cap \f(\homol_1^\perp))$ is at most
equal to the dimension of the codomain~$\dim W = \dim \homol_1-p+1$.
By Lemma~\ref{lem:dimension}, this concludes the proof.
\end{proof}

Clearly, Theorem~\ref{thm:snake} implies Theorem~\ref{thm:intro_snake}.
We can now justify some claims made in the examples in the introduction:

\begin{example}
In Example~\ref{ex:skew_book} we constructed a $(p-1)$-dimensional set of equilibria
on a triangular book graph with~$p\geq 2$ pages, showing that~$\dim \equilibria \geq p-1$.
Notice that the first two pages of a book form a cycle chain of length~$2$, thus by
Theorem~\ref{thm:snake}
\[
	\dim \equilibria \leq \dim \homol_1(G) - 2 + 1 = p-1.
\]
We conclude that~$\dim \equilibria = p-1$.
\end{example}

\begin{example} \label{ex:homotopic_II}
Let~$G$ be the left hand side graph of Figure~\ref{fig:homotopic} and let
the function~$f$ be either periodic or polynomial.
Notice that~$G$ contains two cycles with one edge in common (the edge on the top).
By Theorem~\ref{thm:snake} we conclude that~$\dim \equilibria \leq 1$.

Let~$G$ be the right hand side graph of Figure~\ref{fig:homotopic}. Consider the periodic 
function~$f(x)=\sin(x)$.
We claim that~$\dim \equilibria = 2$. Let the state of the vertices of degree~$3$ be equal to~$0$
and~$\pi$. Denote the states of the remaining vertices
by~$x_1,x_2,x_3$. Then every solution of the equation~$\sum_{k=1}^3 \sin(x_k) = 0$
gives an equilibrium. Therefore~$\dim \equilibria \geq 2$. Since~$\dim \homol_1(G) = 2$,
by Theorem~\ref{thm:dimension} we conclude that~$\dim \equilibria = 2$.
\end{example}

\begin{figure}
\includegraphics[width=0.35\columnwidth]{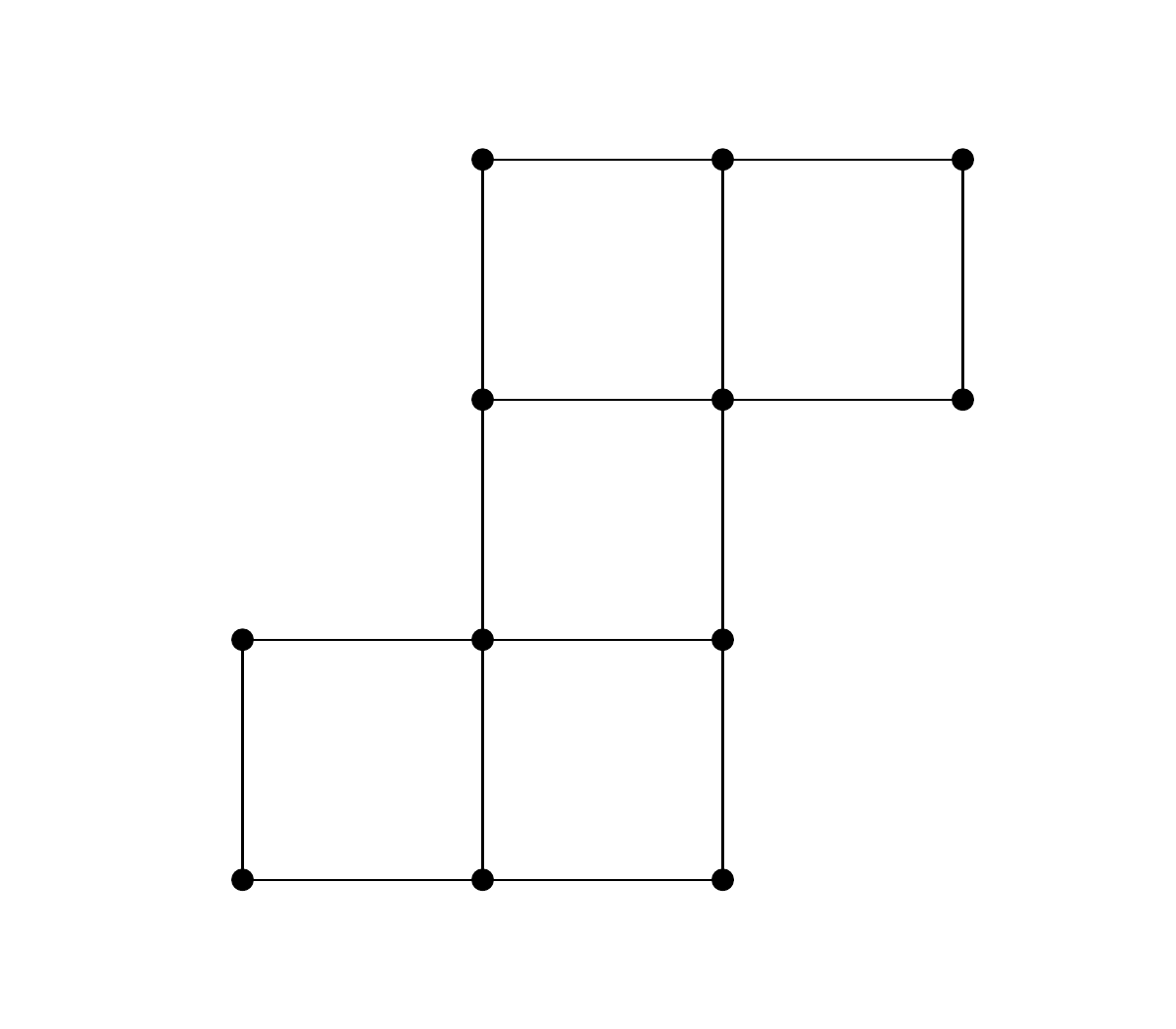}
\includegraphics[width=0.35\columnwidth]{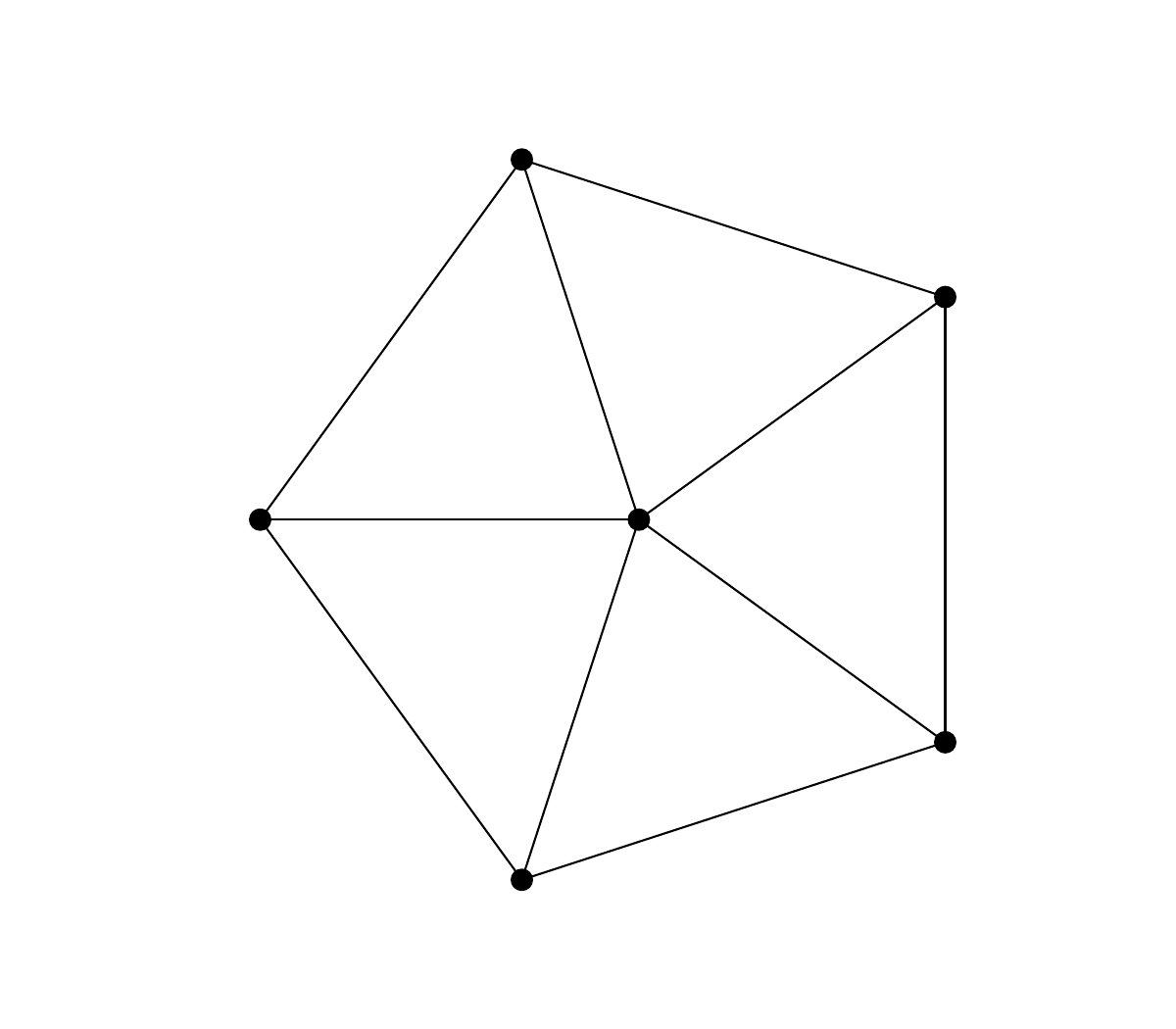}
\caption{A snake graph and a wheel graph.}
\label{fig:snake_wheel}
\end{figure}

Let us give two more examples:

\begin{example}
Snake graphs, which play a central role in cluster algebras~\cite{canakci2013snake},
are particular cases of cycle chains, see Figure~\ref{fig:snake_wheel}.
By Theorem~\ref{thm:snake}, if~$G$ is a snake graph and
the function~$f$ is polynomial or periodic, then~$\dim \equilibria \leq 1$.
\end{example}

\begin{example}
The \emph{wheel graph}~$W_n$ with~$n$ vertices is obtained by connecting the
vertices of the cycle graph~$C_{n-1}$ to a new vertex,
see Figure~\ref{fig:snake_wheel}.
We have~$\dim \homol_1(W_n)=n$ while~$\cc(W_n)=n-1$.
By Theorem~\ref{thm:snake}, if the function~$f$ is polynomial or periodic,
then~$\dim \argequilibria{W_n}{f} \leq 2$.
\end{example}


\subsection{Coverings and Colorings} \label{sec:coverings}

The notion of graph covering is analogous to the notion of topological covering
and captures the idea that, locally, the covering graph looks like the covered one.
The main idea of this section is that the equilibria on the covered graph
can be lifted to equilibria on the covering graph.

\begin{definition} \label{def:covering}
Let~$G$ and~$H$ be two graphs and~$\varphi: V(G) \to V(H)$ a surjection.
Then~$\varphi$ is called \emph{covering map} if for every vertex~$i \in V(G)$
the restriction
\[
	\varphi\vert_{N_G(i)}: N_G(i) \to N_H(\varphi(i))
\]
is well-defined and bijective. If~$\varphi: V(G) \to V(H)$ is a covering map,
we say that the graph~$G$ is a \emph{covering} of the graph~$H$.
\end{definition}

Notice that if~$G$ is a covering of the complete graph~$K_n$, then
the preimages~$(\varphi^{-1}(j))_j$ define an $n$-coloring of~$G$:
any two adjacent vertices have different $\varphi$-image.
We generalize the notion of covering map as follows:

\begin{definition} \label{def:generalized_covering}
Let~$d\geq 1$ be an integer. We say that a map is \emph{$d$-to-one}
if every point of the codomain has exactly $d$~preimages in the domain.
Let~$G$ and~$H$ be two graphs and~$\varphi: V(G) \to V(H)$ a surjection.
Then~$\varphi$ is called \emph{generalized covering map} if for every vertex~$i \in V(G)$
there is an integer~$d_i\geq 1$ such that the restriction
\[
	\varphi\vert_{N_G(i) \setminus \varphi^{-1}(\varphi(i))}:
		N_G(i)\setminus \varphi^{-1}(\varphi(i)) \to N_H(\varphi(i))
\]
is well-defined and $d_i$-to-one. If~$\varphi: V(G) \to V(H)$ is a generalized covering map,
we say that the graph~$G$ is a \emph{generalized covering} of the graph~$H$.
\end{definition}

If~$d_i=1$ and~$N_G(i) \cap \varphi^{-1}(\varphi(i)) = \emptyset$ for every vertex~$i\in V(G)$,
we recover the notion of covering map of Definition~\ref{def:covering}.
If the integers~$d_i$ are all equal, but the sets~$N_G(i) \setminus \varphi^{-1}(\{i\})$
are not necessarily empty, we recover the notion of
external equitable partition~\cite{schaub2016graph}.
Figure~\ref{fig:asymmetric_covering} shows an example of generalized covering
that is not a covering nor a external equitable partition.

The following result shows that generalized coverings induce, in a contravariant way,
an embedding of the set of equilibria.

\begin{theorem} \label{thm:covering}
Let $G$~and~$H$ be finite graphs and let~$\varphi: V(G) \to V(H)$ be a generalized
covering map.
Then the map~$\varphi$ induces an
embedding
\[
	\varphi^*: \argequilibria{H}{f} \to \argequilibria{G}{f},
	\quad (\varphi^*(\y))_i = y_{\varphi(i)}
\]
of the set of equilibria up to symmetry of~$H$ into the set of equilibria up to
symmetry of~$G$.
In particular~$\dim \argequilibria{G}{f} \geq \dim \argequilibria{H}{f}$.
\end{theorem}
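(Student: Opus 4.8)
The plan is to work in vertex space and to show that~$\varphi^*$ is the descent to symmetry classes of the restriction of an injective linear map carrying equilibria of~$H$ to equilibria of~$G$. Recall from Lemma~\ref{lem:energy} that a configuration~$\x \in \R^{V(G)}$ is an equilibrium of~$G$ precisely when~$\sum_{j \in N_G(i)} f(x_j - x_i) = 0$ for every vertex~$i$, and likewise for~$H$; the set~$\argequilibria{G}{f}$ is identified with the set of such configurations modulo the translation group~$\H_0(G)$. Let~$L : \R^{V(H)} \to \R^{V(G)}$ be the linear map~$(L\y)_i = y_{\varphi(i)}$, which is injective because~$\varphi$ is surjective. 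The decisive step is to verify that~$L$ sends vertex-space equilibria of~$H$ to vertex-space equilibria of~$G$.

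For this, fix~$i \in V(G)$ and an equilibrium~$\y$ of~$H$, and split the neighbors of~$i$ according to their image. If~$j \in N_G(i)$ has~$\varphi(j) = \varphi(i)$, then~$(L\y)_j - (L\y)_i = y_{\varphi(j)} - y_{\varphi(i)} = 0$ and the corresponding term~$f(0)$ vanishes since~$f$ is odd. The remaining neighbors form the set~$N_G(i)\setminus \varphi^{-1}(\{\varphi(i)\})$, on which, by the generalized covering property, $\varphi$ is~$d_i$-to-one onto~$N_H(\varphi(i))$; hence each~$\ell \in N_H(\varphi(i))$ is reached exactly~$d_i$ times and
\[
	\sum_{j \in N_G(i)} f\p{(L\y)_j - (L\y)_i}
		= d_i \sum_{\ell \in N_H(\varphi(i))} f\p{y_\ell - y_{\varphi(i)}} = 0,
\]
the last equality being the equilibrium condition for~$\y$ at the vertex~$\varphi(i)$ of~$H$. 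Thus~$L\y$ is an equilibrium of~$G$. I regard this computation as the heart of the theorem: oddness of~$f$ disposes of the intra-fiber neighbors, while the~$d_i$-to-one condition is exactly what is needed to collapse the sum over~$N_G(i)$ into a multiple of the (vanishing) sum over~$N_H(\varphi(i))$.

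It then remains to pass to symmetry classes and to check that the induced map is an embedding, which is where I expect the main technical obstacle to lie. First I would show that the preimage~$\varphi^{-1}(K)$ of any connected component~$K$ of~$H$ is a union of connected components of~$G$: if an edge~$(i,j)$ of~$G$ had~$\varphi(i) \in K$ and~$\varphi(j) \notin K$, then~$\varphi(j) \neq \varphi(i)$, so~$j \in N_G(i)\setminus \varphi^{-1}(\{\varphi(i)\})$ and the covering property forces~$\varphi(j) \in N_H(\varphi(i)) \subseteq K$, a contradiction. Consequently~$L$ carries the indicator of~$K$ to the indicator of~$\varphi^{-1}(K)$, so~$L(\H_0(H)) \subseteq \H_0(G)$ and~$L$ descends to a well-defined map~$\varphi^*$ on equilibria modulo symmetry. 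For injectivity modulo symmetry, suppose~$L\y - L\y' \in \H_0(G)$; setting~$\bm{w} = \y - \y'$, the vector~$L\bm{w}$ is constant on each component of~$G$. Given adjacent vertices~$a, b$ of~$H$, choosing~$i \in \varphi^{-1}(a)$ and, via the~$d_i$-to-one property, a neighbor~$j \in N_G(i)$ with~$\varphi(j) = b$ shows~$w_a = (L\bm{w})_i = (L\bm{w})_j = w_b$; since the components of~$H$ are connected, $\bm{w} \in \H_0(H)$. Finally, $\varphi^*$ is the restriction of an injective linear map and hence a homeomorphism onto its image, that is, a topological embedding. The inequality~$\dim \argequilibria{G}{f} \geq \dim \argequilibria{H}{f}$ then follows because dimension is invariant under homeomorphism and monotone with respect to the inclusion~$\varphi^*(\argequilibria{H}{f}) \subseteq \argequilibria{G}{f}$.
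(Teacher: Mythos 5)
Your proposal is correct and takes essentially the same route as the paper: lift an equilibrium of~$H$ to~$G$ by~$x_i = y_{\varphi(i)}$, use oddness of~$f$ (so~$f(0)=0$) to discard neighbors in the same fiber, and use the $d_i$-to-one property to collapse the remaining sum into~$d_i$ times the vanishing equilibrium sum at~$\varphi(i)$. Your write-up is in fact more complete than the paper's, which leaves implicit the points you verify explicitly (that~$\varphi^*$ descends to symmetry classes, is injective modulo symmetry, and is a topological embedding).
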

\begin{proof}
Fix an equilibrium~$\y \in \R^{V(H)}$. Let~$\x = \varphi^* (\y) \in \R^{V(G)}$
be defined as follows: for every vertex~$i \in V(G)$ let~$x_i = y_{\varphi(i)}$.
Fix a vertex~$i \in V(G)$ and let~$d_i\geq 1$ the integer
given by Definition~\ref{def:generalized_covering}. Then
\begin{align*}
	\dot x_i & = \sum_{j\in N_G(i)} f(x_j-x_i)
		= \sum_{j\in N_G(i)} f(y_{\varphi(j)}-y_{\varphi(i)}) 
			&& \text{by definition of } \x \\
		& = \sum_{j\in N_G(i)\setminus \varphi^{-1}(\{i\})} f(y_{\varphi(j)}-y_{\varphi(i)})
			&& \text{since } f(0)=0\\
		& = d_i \sum_{j\in N_H(\varphi(i))} f(y_j-y_{\varphi(i)}) = 0
			&& \text{since $\y$ is an equilibrium}.
\end{align*}
This shows that the point~$\x = \varphi^* (\y)$ is an equilibrium.
Since~$\varphi^*$ is linear, it preserves translation equivalence.
\end{proof}

\begin{corollary}
Suppose that~$G$ is a \emph{generalized covering} of~$H$.
If the set~$\argequilibria{G}{f}$ is discrete (resp. finite),
then the set~$\argequilibria{H}{f}$ is discrete (resp. finite).
\end{corollary}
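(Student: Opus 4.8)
The plan is to deduce everything from the embedding $\varphi^*\colon \argequilibria{H}{f} \to \argequilibria{G}{f}$ furnished by Theorem~\ref{thm:covering}, together with the elementary facts that both finiteness and discreteness pass to subspaces and are preserved under topological embeddings. Recall that an embedding is a homeomorphism onto its image, so $\argequilibria{H}{f}$ is homeomorphic to the subset $\varphi^*(\argequilibria{H}{f}) \subseteq \argequilibria{G}{f}$, and the two assertions of the corollary amount to transporting the hypothesis on $\argequilibria{G}{f}$ back through this homeomorphism.

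First I would treat the finite case. Since $\varphi^*$ is injective, we have $\abs{\argequilibria{H}{f}} = \abs{\varphi^*(\argequilibria{H}{f})} \leq \abs{\argequilibria{G}{f}}$, so finiteness of the codomain forces finiteness of the domain. For the discrete case I would use that a subspace of a discrete space is discrete: if $\argequilibria{G}{f}$ carries the discrete topology, then so does its subset $\varphi^*(\argequilibria{H}{f})$ in the subspace topology, and since this subset is homeomorphic to $\argequilibria{H}{f}$ via $\varphi^*$, the space $\argequilibria{H}{f}$ is discrete as well.

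There is no serious obstacle here; the only point requiring care is to confirm that the map $\varphi^*$ of Theorem~\ref{thm:covering} is a genuine topological embedding and not merely an injection, since that is what licenses the transport of discreteness. This is immediate from its explicit form $(\varphi^*(\y))_i = y_{\varphi(i)}$: the map is linear, hence continuous, and because $\varphi$ is surjective every coordinate $y_j$ is recovered as $(\varphi^*(\y))_i$ for any $i \in \varphi^{-1}(\{j\})$, so the inverse on the image is itself linear and therefore continuous. One should also fix the reading of \emph{discrete} as ``every point isolated in the subspace topology,'' under which the subspace inheritance invoked above is valid.
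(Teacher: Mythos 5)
Your proposal is correct and is exactly the argument the paper intends: the corollary is stated without proof precisely because it follows immediately from the embedding $\varphi^*$ of Theorem~\ref{thm:covering}, with finiteness transported by injectivity and discreteness by the homeomorphism onto the image. Your additional verification that $\varphi^*$ is a genuine topological embedding (linear, with linear inverse on its image via surjectivity of $\varphi$) is a worthwhile point of care, but it does not change the route.
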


\begin{figure}
\centering
\includegraphics[align=c, width=0.36\columnwidth]{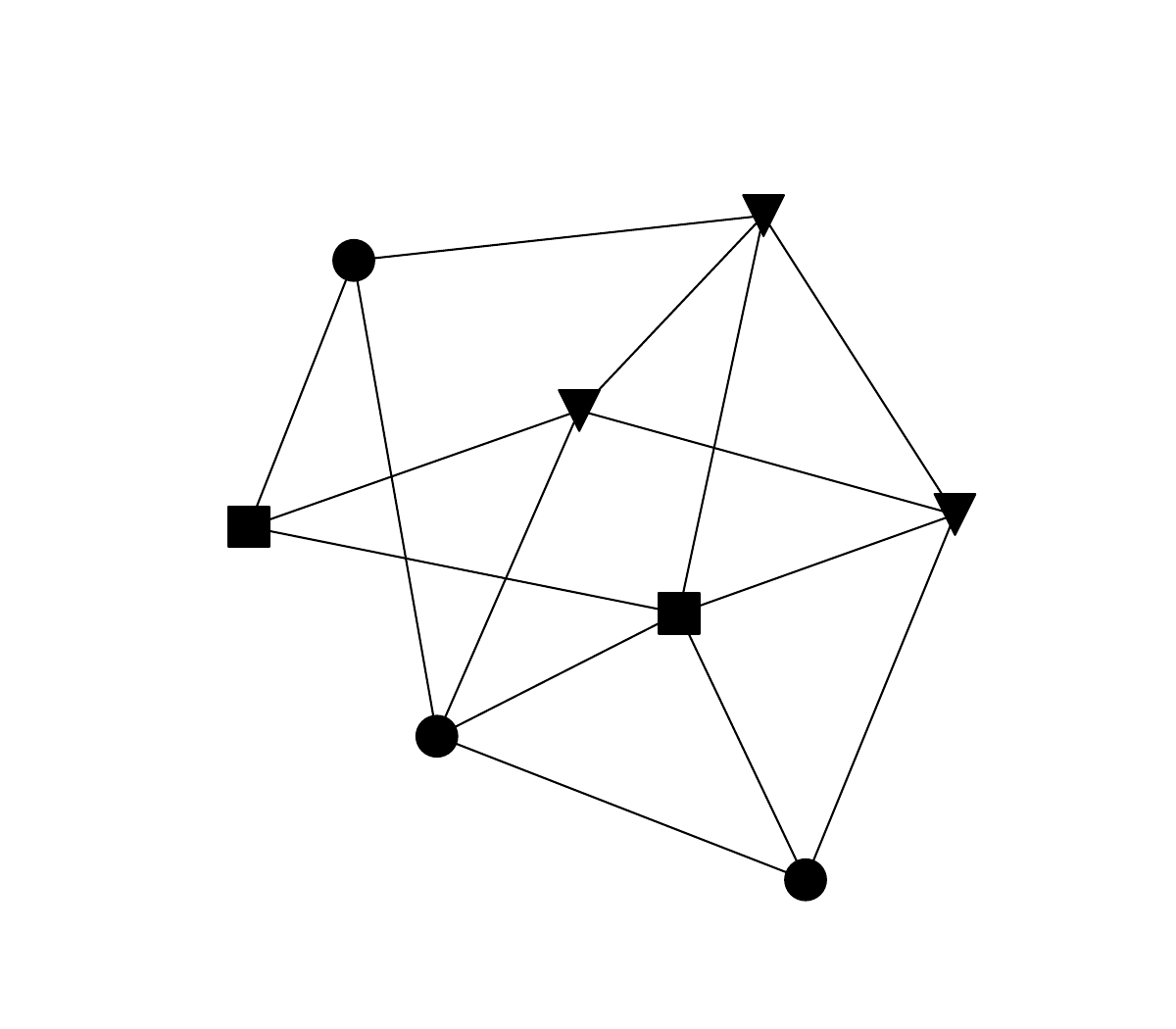}
$\to$
\includegraphics[align=c, width=0.25\columnwidth]{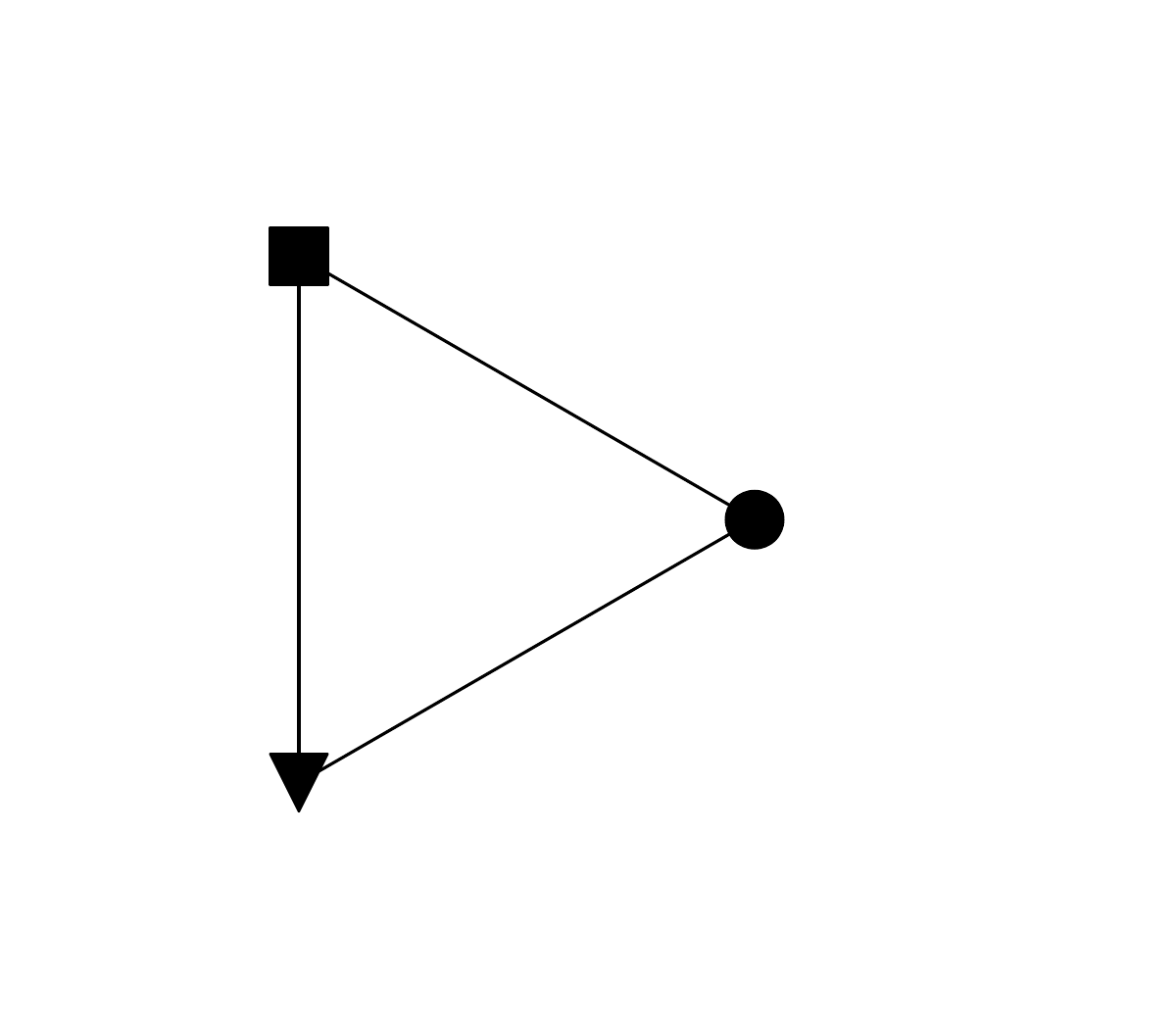}
\caption{
Generalized covering of the triangle graph.
Labels denote correspondences between vertices through the covering map.
}
\label{fig:asymmetric_covering}
\end{figure}

Theorem~\ref{thm:covering} allows to exhibit a rich family of graphs supporting
sets of equilibria of positive dimension.
Several examples of ordinary graph coverings of~$K_4$
can be found in~\cite{stark1996zeta, stark2000zeta, terras2007zeta}.
By Theorem~\ref{thm:covering} and Example~\ref{ex:sine_complete},
every covering of~$K_4$ supports a curve of equilibria for~$f(x)=\sin(x)$.

Let us give an example in the case of generalized coverings:

\begin{example} \label{ex:asymmetric_II}
Figure~\ref{fig:asymmetric_covering} shows that
the asymmetric graph of Figure~\ref{fig:book_asymmetric} in the introduction
is a generalized covering of~$K_3$.
Fix any~$a>0$ and let~$f(x)= ax-x^3$. For every~$\lambda$ small enough
the equation~$f(x)=\lambda$ has three distinct real solutions~$x_1,x_2,x_3$.
Since~$x_1+x_2+x_3=0$,
the configuration~$(0,x_1,x_1+x_2)$ is an equilibrium
of~$K_3$~\cite[Proposition 4.4]{homs2021nonlinear}.
As~$\lambda$ varies, this gives a curve of equilibria on~$K_3$.
By Theorem~\ref{thm:covering}, the curve of equilibria on~$K_3$
embeds to a curve of equilibria on the asymmetric graph.
\end{example}

\subsection{Equilibria and Graph Automorphisms}
As shown by Example~\ref{ex:asymmetric_II}, asymmetric graphs can support
positive-dimensional sets of equilibria: graph symmetry is not the cause
of continuum of equilibria.
Rather, Example~\ref{ex:sine_complete}
suggests that graph symmetry can cause of singularities in the set of equilibria.

Let~$\aut(G)$ denote the automorphism group of the graph~$G$.
Recall that the elements~$\sigma \in \aut(G)$ are the bijections~$\sigma: V\to V$
preserving adjacency. The group~$\aut(G)$ acts naturally on the vector space~$\vertexspace$
by permuting coordinates as
\begin{equation} \label{eq:graph_symmetry}
	\sigma. (x_1,\ldots,x_m) \mapsto (x_{\sigma^{-1}(1)}, \ldots, x_{\sigma^{-1}(n)}).
\end{equation}
As well known in the case~$f(x)=\sin(x)$, see~\cite{ashwin1992dynamics},
this action preserves dynamics:

\begin{lemma}
The action~\eqref{eq:graph_symmetry} makes~$\aut(G)$ a group of symmetries of
the dynamical system~\eqref{eq:main}. In particular~$\aut(G)$ acts naturally on~$\equilibria$
as a group of diffeomorphisms.
\end{lemma}
\begin{proof}
Let~$\sigma \in \aut(G)$ and fix a vertex~$i\in V(G)$. The map~$\sigma$
induces a bijection between~$N(i)$ and~$N(\sigma(i))$.
Let~$\F$ denote the vector field of the system~\eqref{eq:main}.
Then
\[
	\F(\sigma.\x)_i
	= \sum_{j\in N(i)} f(x_{\sigma^{-1}(j)}-x_{\sigma^{-1}(i)})
	= \sum_{j\in N(\sigma^{-1}(i))} f(x_j-x_{\sigma^{-1}(i)})
	= \F(\x)_{\sigma^{-1}(i)},
\]
showing that the vector field~$\F$ is $\sigma$-equivariant.
\end{proof}

The discrete group of symmetries~$\aut(G)$ should not be confused with
the group of translational symmetries~$\homol_0(G)$ considered throughout the paper.
The joint action is given by~$\homol_0(G) \rtimes \aut(G)$.

The dynamics-preserving action of~$\aut(G)$ in vertex space~$\vertexspace$
induces a dynamics-preserving action of~$\aut(G)$ in vertex space~$\edgespace$.
In particular, the automorphism group~$\aut(G)$ acts on the set of
equilibria~$\equilibria$.

Recall that a point~$\x\in \equilibria$ is called regular if the set~$\equilibria$
is a manifold in a neighborhood of~$\x$, and it is called singular otherwise.
We have:

\begin{lemma} \label{lem:auto_singular}
Let~$\sigma\in \aut(G)$ and suppose that a point~$\x\in \equilibria$
is fixed by~$\sigma$. Then either~$\x$ is a singular point,
or~$\x$ is contained in a $\sigma$-invariant manifold of equilibria.
\end{lemma}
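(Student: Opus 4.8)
The statement is a dichotomy, and the singular case is vacuous, so the plan is to assume that~$\x$ is a \emph{regular} point of~$\equilibria$ and then construct an explicit $\sigma$-invariant manifold of equilibria through~$\x$. The starting observation is that the action of~$\sigma$ on~$\edgespace$ induced by the vertex permutation~\eqref{eq:graph_symmetry} is linear: a graph automorphism maps oriented edges to oriented edges, so~$\sigma$ acts on~$\edgespace = \R^E$ by a signed permutation of coordinates. In particular~$\sigma$ is a homeomorphism of~$\edgespace$, and since it preserves the dynamics it preserves~$\equilibria$ setwise. As~$G$ is finite, the group~$\aut(G)$ is finite, so~$\sigma$ has some finite order~$d$.

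First I would use regularity to fix an open neighborhood~$U_0$ of~$\x$ in~$\equilibria$ that is a smooth manifold, which exists precisely because~$\x$ is regular. The idea is then to \emph{symmetrize}~$U_0$ over the cyclic group generated by~$\sigma$ by setting
\[
	M = \bigcap_{i=0}^{d-1} \sigma^i(U_0).
\]
I would verify three things. First, each~$\sigma^i(U_0)$ is open in~$\equilibria$: writing~$U_0 = \equilibria \cap O$ with~$O$ open in~$\edgespace$, we have~$\sigma^i(U_0) = \equilibria \cap \sigma^i(O)$ because~$\sigma^i$ is a bijection preserving~$\equilibria$, and~$\sigma^i(O)$ is open since~$\sigma^i$ is a homeomorphism of~$\edgespace$. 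Hence~$M$ is open in~$\equilibria$. Second,~$M$ contains~$\x$: since~$\sigma.\x = \x$ we have~$\x \in \sigma^i(U_0)$ for every~$i$. Third,~$M$ is~$\sigma$-invariant: using~$\sigma^d = \mathrm{id}$,
\[
	\sigma(M) = \bigcap_{i=0}^{d-1} \sigma^{i+1}(U_0) = \bigcap_{i=0}^{d-1} \sigma^i(U_0) = M.
\]
Finally~$M \subseteq U_0$ is open in the manifold~$U_0$, hence is itself a smooth manifold contained in~$\equilibria$, completing the construction.

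The main point to get right is the interplay between the two notions at play: that~$\sigma$ acts by a \emph{homeomorphism} of the ambient edge space (so that it carries the manifold chart around~$\x$ to another manifold chart around~$\x$) and that regularity means~$\equilibria$ genuinely looks like a single manifold near~$\x$, so that intersecting the finite orbit of charts does not destroy the manifold property. I expect the only delicate step to be confirming that the symmetrized neighborhood~$M$ remains a manifold rather than acquiring extra structure; this is handled cleanly by noting that~$M$ is an open subset of~$U_0$, and openness is preserved because each~$\sigma^i$ is a homeomorphism of~$\edgespace$ fixing~$\equilibria$. No deeper analytic input is needed beyond the local manifold description of regular points recalled in the preliminaries on real-analytic sets.
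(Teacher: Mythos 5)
Your proof is correct, and its skeleton matches the paper's: both arguments use that~$\sigma$ acts linearly on~$\edgespace$ as a signed permutation of coordinates preserving~$\equilibria$, and both start from a manifold neighborhood of the regular point~$\x$. The difference lies in how $\sigma$-invariance of that neighborhood is achieved. The paper observes that the signed-permutation action is an \emph{isometry} of~$\edgespace$, so any Euclidean open ball~$U$ centered at the fixed point~$\x$ satisfies~$\sigma.U = U$; taking~$U$ small enough that~$U \cap \equilibria$ is a manifold immediately produces the invariant manifold. You instead use only that~$\sigma$ is a homeomorphism of finite order~$d$ preserving~$\equilibria$, and symmetrize by intersecting the orbit of a chart, $M = \bigcap_{i=0}^{d-1}\sigma^i(U_0)$. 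Both mechanisms are sound, and your verification (openness of each~$\sigma^i(U_0)$ in~$\equilibria$, membership of~$\x$, invariance via~$\sigma^d = \mathrm{id}$, and the fact that an open subset of a manifold is a manifold) is complete. The paper's route is slightly slicker --- one ball, no appeal to finiteness of the order of~$\sigma$ --- while yours is marginally more general: it would survive in a setting where the group acts on~$\equilibria$ by homeomorphisms that are not isometries, provided the acting element has finite order. In the present setting both hypotheses (isometry, finite order) come for free, so the choice between them is a matter of taste.
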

\begin{proof}
Let~$\sigma\in \aut(G)$. While~$\sigma$ acts on~$\vertexspace$
by coordinate permutations, it acts on~$\edgespace$ as a composition of a coordinate permutation
and possibly some change of signs,
due to the fact that the edge orientations in the definition of~$\edgespace$
might not be preserved by graph automorphisms. In both cases~$\sigma$
acts as an isometry, preserving the Euclidean distance.

Suppose that the fixed point~$\x$ is a regular point of~$\equilibria$.
Then there is an Euclidean open
ball~$U\subseteq \edgespace$ of~$\x$ such that~$U \cap \equilibria$
is a manifold. Since the action of~$\sigma$ on~$\edgespace$ is an isometry,
we have~$\sigma.U=U$. Since~$\sigma$ preserves~$\equilibria$,
we obtain~$\sigma.(U\cap \equilibria) = U\cap \equilibria$.
Therefore the set~$U\cap \equilibria$ is a $\sigma$-invariant manifold.
\end{proof}


\subsection{Equilibria and Zeros of the Coupling Function}
For every graph~$G$ and for every function~$f \in \anal$ 
the set~$\equilibria$ contains at least one point, namely~$\0$.
We will see that in order for~$\equilibria$ to contain
more than one point, the function~$f$ must vanish at some point~$x\neq 0$
and, in order for~$\equilibria$ to have positive dimension,
the function~$f$ must change sign at some point~$x\neq 0$.

\begin{lemma} \label{lem:skew_norm}
For every equilibrium~$\y \in \equilibria$ the following identity holds:
\begin{equation} \label{eq:skew_norm}
	y_1 f(y_1) + \cdots + y_m f(y_m) = 0.
\end{equation}
\end{lemma}
\begin{proof}
Since~$\equilibria = \f^{-1}(\homol_1) \cap \homol_1^\perp$
the vectors~$\f(\y)$ and~$\y$ are orthogonal.
\end{proof}

\begin{lemma} \label{lem:exp_equilibria}
Let~$G$ be a connected graph and let~$f\in \anal$.
Suppose that there is~$x\neq 0$ such that~$f(x)=0$. Then the set~$\equilibria$
contains at least~$2^{n-1}$ points.
\end{lemma}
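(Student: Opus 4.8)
The plan is to construct an explicit family of $2^{n-1}$ equilibria using only the two vertex values $0$ and $a$, where $a\neq 0$ is the given zero of $f$. First I would record that, since $f$ is odd, $f(0)=0$, and since $f(a)=0$ we also have $f(-a)=-f(a)=0$; hence $f$ vanishes on the three-point set $\{-a,0,a\}$. The key observation is that any configuration $\x\in\vertexspace$ whose entries all lie in $\{0,a\}$ produces, along every oriented edge $(i,j)\in E$, a difference $x_j-x_i\in\{-a,0,a\}$, so that $f(x_j-x_i)=0$. Consequently $\dot x_i=\sum_{j\in N(i)} f(x_j-x_i)=0$ for every vertex $i$, and every such two-valued $\x$ is an equilibrium of~\eqref{eq:main}.

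Next I would transfer these configurations to the set $\equilibria$ in edge space. By Definition~\ref{def:set_of_equilibria} and Corollary~\ref{cor:vector_form}, for an equilibrium $\x$ the point $\y=B^\trans\x$ satisfies $\y\in\im(B^\trans)=\H_1^\perp$ and $\f(\y)\in\ker B=\H_1$, so $\y\in\equilibria$. Moreover two vertex configurations yield the same $\y$ precisely when they differ by an element of $\H_0=\ker(B^\trans)$. Since $G$ is connected, $\H_0$ is the line spanned by~$\1$, so translation classes of equilibria in $\vertexspace$ correspond bijectively to points of $\equilibria$.

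It then remains to count these classes. To select a canonical representative in each translation class, I would fix the first coordinate $x_1=0$ and let $(x_2,\ldots,x_n)$ range over all of $\{0,a\}^{n-1}$, giving $2^{n-1}$ configurations, each still of the two-valued type above and hence an equilibrium. If two of them, say $\x$ and $\x'$, were translation-equivalent, then $\x'-\x=c\,\1$ for some $c\in\R$; reading off the first coordinate forces $c=0$, whence $\x=\x'$. Thus these $2^{n-1}$ configurations lie in pairwise distinct translation classes and produce $2^{n-1}$ distinct points of $\equilibria$, as claimed.

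The argument is elementary; the only step requiring care is the passage to the quotient by $\H_0$, namely ensuring that distinct two-valued configurations really do give distinct points of $\equilibria$ rather than being identified by translational symmetry. Fixing one coordinate disposes of this cleanly. I note in passing that allowing all $2^n$ two-valued configurations and identifying only the all-$0$ and all-$a$ states would even yield the stronger bound $2^n-1$, but the weaker $2^{n-1}$ is all that is needed and is obtained most transparently by the coordinate-fixing above.
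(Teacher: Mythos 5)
Your proof is correct and takes essentially the same approach as the paper: both construct equilibria by letting every vertex take a value in $\{0,a\}$, observe that all edge differences then lie in the zero set $\{-a,0,a\}$ of $f$, and fix one vertex at $0$ so that distinct choices give non-equivalent equilibria up to translational symmetry, yielding $2^{n-1}$ points. Your extra care in passing to edge space via $\y=B^\trans\x$ (and your side remark that one could even extract the bound $2^n-1$) only makes explicit what the paper's proof leaves implicit.
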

\begin{proof}
We define an equilibrium~$\x\in \vertexspace$ as follows:
fix a vertex~$i$ and let~$x_i=0$;
for every other vertex~$j\in V\setminus\{i\}$ let~$x_j\in \{0,x\}$.
Then for every edge~$pq \in E$ we have~$x_p-x_q \in \{0, +x, -x\}$. Therefore~$f(x_p-x_q)=0$
and~$\x$ is an equilibrium. Since~$x_i=0$ is fixed, different choices of~$x_j\in \{0,x\}$
give non-equivalent equilibria up to symmetry.
\end{proof}

\begin{theorem} \label{thm:f_zeros_equilibria}
Let~$G$ be a graph with at least one edge and let~$f\in \anal$.
Let~$\mathcal Z(f)$ denote the set of zeros of the function~$f$.
The following facts are true:
\begin{enumerate} [(i)]
\item ~$\equilibria = \{\0\}$ if and only if~$\mathcal Z(f) = \{0\}$;
\item if~$f(x)\geq 0$ for every~$x>0$, or if~$f(x)\leq 0$ for every~$x>0$,
then~$\equilibria$ is discrete and~$\dim \equilibria = 0$;
\item if~$f$ is increasing, then
every trajectory converges to the equilibrium~$\0$.
\end{enumerate}
\end{theorem}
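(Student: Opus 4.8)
The plan is to handle all three parts through the skew-orthogonality identity of Lemma~\ref{lem:skew_norm}, supplemented for the last part by the gradient structure of Lemma~\ref{lem:energy}. Throughout I use that $f$ odd forces $f(0)=0$, and that an analytic non-constant $f$ has isolated zeros, so $\mathcal Z(f)$ is discrete.

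For part~(i), the condition $\mathcal Z(f)=\{0\}$ means exactly that $f$ has no nonzero root. For the implication $\mathcal Z(f)=\{0\}\Rightarrow \equilibria=\{\0\}$: since $f$ is continuous and nonvanishing on $(0,\infty)$ it keeps a constant sign there, and oddness then gives $x f(x)>0$ for every $x\neq 0$. For any equilibrium $\y$, Lemma~\ref{lem:skew_norm} yields $y_1 f(y_1)+\cdots+y_m f(y_m)=0$, a sum of strictly positive terms unless every $y_e=0$; hence $\y=\0$. For the converse I argue contrapositively: if some $x\neq 0$ satisfies $f(x)=0$, then choosing a connected component of $G$ that carries an edge and applying the construction of Lemma~\ref{lem:exp_equilibria} to it (extending by $0$ on the remaining components) produces an equilibrium whose edge differences are not all zero, so $\equilibria\neq\{\0\}$.

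For part~(ii), assume $f(x)\geq 0$ for all $x>0$ (the case $f\leq 0$ is symmetric). Oddness gives $x f(x)\geq 0$ for all $x$, so for an equilibrium $\y$ the identity of Lemma~\ref{lem:skew_norm} again forces every term $y_e f(y_e)$ to vanish; since $f(0)=0$ this gives $f(y_e)=0$ for every edge, i.e. $\f(\y)=\0$. Thus $\equilibria\subseteq \f^{-1}(\0)=\mathcal Z(f)^{m}$, and as $\mathcal Z(f)$ is discrete in $\R$ the product $\mathcal Z(f)^m$ is discrete in $\edgespace$. Hence $\equilibria$ is discrete and $\dim\equilibria=0$.

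For part~(iii), I first reduce to part~(i): if $f$ is increasing then $f(x)\geq f(0)=0$ for $x>0$, and were $f(x_0)=0$ for some $x_0>0$ the function would vanish on all of $[0,x_0]$, contradicting isolated zeros; so $\mathcal Z(f)=\{0\}$ and by part~(i) the only equilibrium up to symmetry is $\0$. To upgrade this to convergence I use $\dot\x=-\nabla\E$ from Lemma~\ref{lem:energy}, where $\E(\x)=\sum_{(j,k)\in E}g(x_k-x_j)$ for a primitive $g$ of $f$. Since $g'=f$ is increasing with $f>0$ on $(0,\infty)$, each $g$ is convex with a strict global minimum at $0$, so $\E$ is convex, nonnegative, and vanishes exactly on the constant-per-component configurations. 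The conserved quantities of Lemma~\ref{lem:constant_of_motion} confine each trajectory to a leaf $\d+\H_0^\perp$ (Proposition~\ref{prop:sym}); on this leaf $\E$ is coercive, because $B^\trans$ is injective on $\H_0^\perp$, so $|\x|\to\infty$ forces some edge difference and hence $\E$ to blow up. The trajectory therefore stays in a compact sublevel set, and standard gradient-flow theory drives its $\omega$-limit set into the equilibrium set, which on the leaf is the single minimizer; a bounded trajectory with a unique limit point converges to it, which in edge coordinates $\y=B^\trans\x$ is convergence to $\0$. The main obstacle is precisely this last part: the Hessian of $\E$ at the limiting equilibrium may be degenerate (for example $f(x)=x^3$ gives $f'(0)=0$), so linearization cannot establish asymptotic stability and the argument must rest on global convexity and coercivity rather than on a spectral gap.
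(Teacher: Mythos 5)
Your proposal is correct and follows essentially the same route as the paper: parts (i) and (ii) rest on the orthogonality identity of Lemma~\ref{lem:skew_norm} combined with Lemma~\ref{lem:exp_equilibria} applied to a component containing an edge, and part (iii) rests on convexity of the energy~$\E$. The only difference is that you fill in the convergence argument for part (iii) — coercivity of~$\E$ on each leaf~$\d+\H_0^\perp$ via injectivity of~$B^\trans$ there, plus an $\omega$-limit set argument — details that the paper's proof leaves implicit in its terse appeal to convexity.
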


\begin{proof}
Suppose that~$\mathcal Z(f) = \{0\}$. Without loss of generality, we can suppose~$f(x)>0$
for every~$x>0$ (replacing~$f$ by~$-f$ does not change the set of equilibria).
It follows that~$xf(x)>0$ for every~$x\neq 0$ and by~\eqref{eq:skew_norm}
we have~$\equilibria = \{\0\}$.
Conversely, suppose that~$f(x)=0$ for some~$x\neq 0$. Apply Lemma~\ref{lem:exp_equilibria}
to a connected component of~$G$ containing an edge.

If~$f(x)\geq 0$ for every~$x>0$ then~$xf(x)\geq 0$ for all~$x\neq 0$
and by~\eqref{eq:skew_norm} it follows that~$\equilibria \subseteq \mathcal Z(f)^m$.
In particular~$\equilibria$ is discrete and $0$-dimensional.
The case~$f(x)\leq 0$ for every~$x>0$ is analogous.

Suppose that~$f$ is increasing. Then~$\mathcal Z(f) = \{0\}$ and therefore~$\0$
is the only equilibrium up to symmetry. Moreover, it follows that
the energy function~\eqref{eq:energy} is convex, and thus
every trajectory converges to only local minimum.
\end{proof}

From Theorem~\ref{thm:f_zeros_equilibria} it follows that
for increasing coupling functions
the long term behavior of the system is the same as in the linear case~$f(x)=x$.


\section{Stability} \label{sec:stability}


\subsection{Gradient Flows}
As shown by Lemma~\ref{lem:energy}, the dynamical system~\eqref{eq:main}
can be written as~$\dot \x = -\nabla \E(\x)$ where~$\E$, called \emph{energy function},
is given by~\eqref{eq:energy}.
Trajectories always evolve in the direction in which the energy decreases maximally.
In particular, the only periodic trajectories are the equilibria.
Moreover, equilibria are exactly the critical points of the energy~$\E$.
Since energy is minimized over time, it is natural to expect a relation between
stable equilibria and local minimizers. However, this relation is subtle.
For a moment, let us review stability in a general gradient dynamical system.

\begin{definition} \label{def:stable}
An equilibrium~$\x$ is called~\emph{Lyapunov stable} if
for every neighborhood~$U$ of~$\x$ there is a forward-invariant
neighborhood~$V$ of~$\x$ contained in~$U$.
An equilibrium~$\x$ is called~\emph{asymptotically stable} if it is Lyapunov stable
and there is a neighborhood~$V$ of~$\x$ such that for every~$\y\in V$
the trajectory~$\flow_t(\y)$ converges to~$\x$ as~$t$ converges to~$+\infty$.
An equilibrium~$\x$ is called~\emph{linearly stable} if all eigenvalues
have strictly negative real part.
\end{definition}

The given definition of Lyapunov stability is equivalent to the following:
for every~$\varepsilon>0$ there is~$\delta\in(0,\varepsilon)$
such that~$\flow_t(B_\delta(\x)) \subseteq B_\varepsilon (\x)$ for every~$t\geq 0$.
A third equivalent definition appears in~\cite{anosov1988dynamical}:
as~$\y$ converges to~$\x$, the forward time
trajectory~$(\flow_t(\y))_{t\geq 0}$ converges uniformly in~$t$ to the stationary
trajectory~$(\x)_{t\geq 0}$.

\begin{definition}
Let~$h$ be a real-valued function.
We say that~$\x$ is a \emph{local minimizer} of~$h$
if there is a neighborhood~$U$ of~$\x$ such that~$h(\y)\geq h(\x)$ for every~$\y\in U$.
We say that~$\x$ is a \emph{strict local minimizer} of~$h$
if there is a neighborhood~$U$ of~$\x$ such that~$h(\y)> h(\x)$ for every~$\y\in U\setminus\{\x\}$.
We say that~$\x$ is an \emph{isolated local minimizer} of~$h$
if there is a neighborhood of~$\x$ in which~$\x$ is the only local minimizer of~$h$.
\end{definition}

As pointed out in~\cite{de2019asymptotically}, the relation between stability
and minimality in gradient systems is subtle and
standard references contain mistakes.
Figure~\ref{fig:gradient_optimization} summarizes the relations:
(a) is proven in~\cite{absil2006stable};
(b) is trivial;
(c) is a theorem by Lyapunov~\cite[Section 4.2]{anosov1988dynamical};
(d) is trivial;
proving (e) is an exercise in analysis;
(f) is the content of~\cite[Theorem 1]{de2019asymptotically};
(g) follows from Hadamard-Perron Theorem~\cite[Section 4.1]{anosov1988dynamical};
(h) follows from the fact that the set~$\{\y: h'(\y)=0\}$ is real-analytic and
thus locally-finite union of manifolds~\cite[Theorem 1.2.10]{trotman2020stratification}.

\begin{figure} [h]
\begin{tikzcd}
\text{local minimizer}  \arrow[dotted, Leftrightarrow, r, "(a)"] 							&  \text{Lyapunov stable} \\
\text{strict local minimizer} \arrow[Rightarrow, u, "(b)"] \arrow[Rightarrow, ur, "(c)"] 
	\arrow[dotted, Rightarrow, d, shift left=2, "(h)"] & 
\text{asymptotically stable} \arrow[Rightarrow, u, "(d)"] \\
\text{isolated local minimizer} \arrow[Rightarrow, u, shift left = 2, "(e)"]
	\arrow[Leftrightarrow, ur, "(f)"] &  \text{linearly stable} \arrow[Rightarrow, u, "(g)"]
\end{tikzcd}
\caption{
Solid arrows hold for smooth energy functions.
Dotted arrows hold for analytic energy functions
but fail in general for smooth energy functions.
}
\label{fig:gradient_optimization}
\end{figure}
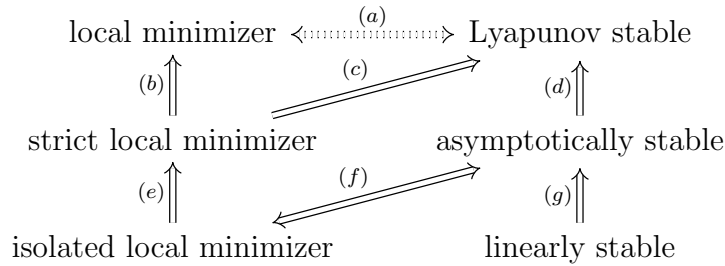

Let us now return to our system~\eqref{eq:main}.
While in Section~\ref{sec:set_of_equilibria} we worked mainly in edge space,
thus getting rid of translational symmetry,
we find discussing stability in vertex space more clear.
Recall that
the space~$\vertexspace$ is foliated in dynamically invariant subspaces of the form~$\x+\homol_0^\perp$,
all supporting the same dynamics, see Proposition~\ref{prop:sym}.
We say that a point~$\x$ is \emph{(local, strict, isolated) minimizer up to symmetry} if
it is (local, strict, isolated) minimizer of the restriction of~$\E$ to~$\x+\homol_0^\perp$.
Similarly, we say that an equilibrium~$\x$ is
\emph{(Lyapunov, asymptotically, linearly) stable up to symmetry}
if it has such property in the restricted dynamical system~$\x+\homol_0^\perp$.

\begin{proposition} \label{prop:stability}
Let~$G$ be a graph and let~$f\in \anal$.
The following facts are true:
\begin{enumerate} [(i)]
\item an equilibrium is Lyapunov stable if and only if it is Lyapunov stable up to symmetry
	and if and only if it is a local minimizer of~$\E$;
\item an equilibrium is asymptotically stable up to symmetry if and only if
	it is an isolated local minimizer of~$\E$ up to symmetry;
\item an equilibrium~$\x$ is linearly stable up to symmetry if and only if the matrix
\begin{equation} \label{eq:hessian}
	D^2 \E(\x) = B \mathrm{diag}(((f'(x_i-x_j))_{(i,j)\in E}) B^\trans
\end{equation}
is positive semidefinite and has rank $n-c$.
\end{enumerate}
\end{proposition}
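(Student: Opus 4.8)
The plan is to reduce all three statements to the reduced gradient flow on a single leaf $\y_0 + \H_0^\perp$ and then read off the conclusions from the general gradient dictionary in Figure~\ref{fig:gradient_optimization}. First I would record that $\E$ is invariant under translation by $\H_0$, since it depends on $\x$ only through the edge differences $x_k - x_j$; equivalently, by Corollary~\ref{cor:vector_form} the gradient $\nabla\E = B\f(B^\trans\x)$ always lies in $\im B = (\ker B^\trans)^\perp = \H_0^\perp$. Decomposing $\vertexspace = \H_0 \oplus \H_0^\perp$ and writing $\y = \y_0 + \y_\perp$, this forces $\dot\y_0 = 0$ and, using translation invariance, $\dot\y_\perp = -\nabla\E(\y_\perp)$ independently of $\y_0$. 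Hence the full flow is the product of the trivial flow on $\H_0$ with the reduced gradient flow on $\H_0^\perp$ driven by the analytic energy $\E_\perp := \E\vert_{\H_0^\perp}$, and ``stability up to symmetry'' is by definition stability of this reduced system.

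For~(i), I would exploit the product structure to identify full-space and up-to-symmetry Lyapunov stability: writing $\|\flow_t\y - \x\|^2 = \|\y_0 - \x_0\|^2 + \|\y_\perp(t) - \x_\perp\|^2$ with the first summand frozen in $t$, the full trajectory remains in $B_\varepsilon(\x)$ for all $t \geq 0$ precisely when the reduced one does, so the two notions coincide. Since $\E_\perp$ is analytic, arrow~(a) of Figure~\ref{fig:gradient_optimization} then equates reduced Lyapunov stability with $\x_\perp$ being a local minimizer of $\E_\perp$, i.e. a local minimizer up to symmetry; and because $\E$ is $\H_0$-invariant, the latter is equivalent to being a local minimizer of $\E$ on $\vertexspace$. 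Chaining these equivalences gives~(i).

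Part~(ii) is then immediate: both ``asymptotically stable up to symmetry'' and ``isolated local minimizer up to symmetry'' are statements about the reduced analytic gradient system with energy $\E_\perp$, so the equivalence is arrow~(f) of Figure~\ref{fig:gradient_optimization} applied to $\E_\perp$. For~(iii), I would first obtain the Hessian by differentiating $\nabla\E = B\f(B^\trans\x)$, which yields $D^2\E(\x) = B\,\mathrm{diag}((f'(x_i-x_j))_{(i,j)\in E})\,B^\trans$ (using that $f'$ is even since $f$ is odd). The linearization of the reduced flow at $\x_\perp$ is $-D^2\E(\x)\vert_{\H_0^\perp}$, so linear stability up to symmetry means every eigenvalue of the symmetric operator $D^2\E(\x)\vert_{\H_0^\perp}$ is strictly positive. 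The structural point is the pair of inclusions $\H_0 = \ker B^\trans \subseteq \ker D^2\E(\x)$ and $\im D^2\E(\x) \subseteq \im B = \H_0^\perp$: together they make $\H_0$ and $\H_0^\perp$ orthogonal invariant subspaces on which $D^2\E(\x)$ acts as zero and as $D^2\E(\x)\vert_{\H_0^\perp}$ respectively. Therefore the spectrum consists of at least $c = \dim\H_0$ zeros plus the eigenvalues on $\H_0^\perp$, and all the latter are positive if and only if $D^2\E(\x)$ is positive semidefinite with nullity exactly $c$, i.e. $\rank D^2\E(\x) = n - c$.

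I expect the main obstacle to lie in~(i): carefully matching full-space Lyapunov stability with the reduced notion through the product decomposition, and checking that the hypotheses of the cited analytic results (arrows~(a) and~(f)) genuinely hold, in particular that $\E_\perp$ is analytic and that the reduced flow is exactly its negative gradient flow. In~(iii) the only delicate point is the spectral bookkeeping: the rank condition must force $\ker D^2\E(\x)$ to be exactly $\H_0$ rather than larger, which is where the a priori inclusion $\H_0 \subseteq \ker D^2\E(\x)$ and the count $\dim\H_0 = c$ are essential.
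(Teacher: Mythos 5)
Your proof is correct and follows essentially the same route as the paper: reduce to the gradient flow on a leaf $\x + \H_0^\perp$ (Proposition~\ref{prop:sym}), invoke the analytic gradient-system results, arrows~(a) and~(f) of Figure~\ref{fig:gradient_optimization}, for parts~(i) and~(ii), and prove~(iii) by computing the Hessian from Corollary~\ref{cor:vector_form} and counting the zero eigenvalues forced by translational symmetry. The only difference is one of detail: you spell out the product decomposition of the flow and the spectral bookkeeping on $\H_0 \oplus \H_0^\perp$, which the paper compresses into an appeal to ``the general theory'' and a multiplicity remark.
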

\begin{proof}
Since the function~$f$ is analytic, the energy function~$\E$ is analytic, and thus
the first two statements follow from the general theory.
The matrix~\eqref{eq:hessian} is the hessian matrix~$D^2 \E(\x)$ of the energy function
and can be computed directly Corollary~\ref{cor:vector_form}.
Due to translational symmetry, for every equilibrium the eigenvalue~$0$ has multiplicity
at least equal to~$c$. The equilibrium~$\x$ is hyperbolic in
the invariant subspace~$\x + \homol_0^\perp$
if and only if the multiplicity of~$0$ is exactly~$c$.
\end{proof}

\begin{example} \label{ex:sine_complete_stability}
Let~$G$ be the complete graph with~$n\geq 4$ vertices and let~$f(x)=\sin(-x)$.
Compared to Example~\ref{ex:sine_complete}, we replaced~$\sin(x)$ by~$\sin(-x)$,
thus inverting time.
The energy~\eqref{eq:energy} can be written as
\[
	\E(\x) = \frac{1}{2} \sum_{j=1}^n\sum_{k=1}^n \cos(x_k - x_j) =
		\frac{1}{2} \abs{\sum_{j=1}^n e^{\mathrm{i} x_j}}^2.
\]
Therefore, the set of global minimizers is given by
\[
	\sum_{j=1}^n \cos(x_j) =
	\sum_{j=1}^n \sin(x_j) = 0.
\]
By Proposition~\ref{prop:stability} these points are Lyapunov stable. In particular,
the system contains an $(n-2)$-dimensional set of Lyapunov stable equilibria,
or $(n-3)$-dimensional set of Lyapunov stable equilibria up to symmetry.
Notice that these equilibria are not asymptotically stable up to symmetry, since
they are not isolated.
\end{example}

From Proposition~\ref{prop:stability} we obtain the following sufficient
condition for linear stability:

\begin{corollary} \label{cor:stability_positive_edges}
Let~$\x$ be an equilibrium and suppose that
for every edge~$(i,j)$ we have~$f'(x_i-x_j)>0$. Then the equilibrium is
linearly stable up to symmetry.
\end{corollary}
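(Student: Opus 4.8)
The plan is to apply part (iii) of Proposition~\ref{prop:stability}, which reduces linear stability up to symmetry to verifying that the Hessian
$D^2\E(\x) = B\,\mathrm{diag}((f'(x_i-x_j))_{(i,j)\in E})\,B^\trans$
is positive semidefinite and has rank exactly $n-c$. Under the hypothesis that $f'(x_i-x_j)>0$ for every edge $(i,j)\in E$, the diagonal matrix $D = \mathrm{diag}((f'(x_i-x_j))_{(i,j)\in E})$ is positive definite, so $D^2\E(\x) = B D B^\trans$ is a weighted graph Laplacian with strictly positive edge weights. This is the structure I would exploit.

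First I would establish positive semidefiniteness. For any $\v\in\vertexspace$ the quadratic form factors as $\v^\trans B D B^\trans \v = (B^\trans\v)^\trans D\,(B^\trans\v) = \sum_{e\in E} d_e (B^\trans\v)_e^2$, where $d_e = f'(x_i-x_j)$ denotes the weight of edge $e=(i,j)$. This is nonnegative because every weight is positive, giving positive semidefiniteness immediately.

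Next I would compute the rank by identifying the kernel. Since $D$ is positive definite, the factored form vanishes if and only if $B^\trans\v=0$; that is, $\ker(B D B^\trans) = \ker(B^\trans) = \H_0$. As recalled in the preliminaries, $\H_0$ is spanned by the $c$ indicator vectors of the connected components, so $\dim\H_0 = c$. Therefore $D^2\E(\x)$ has rank $n-c$, and Proposition~\ref{prop:stability}(iii) yields linear stability up to symmetry.

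There is no serious obstacle here; the only point requiring care is the kernel identification, where positive definiteness of $D$ is essential to pass from $\v^\trans B D B^\trans \v = 0$ to $B^\trans\v=0$. Were the hypothesis merely $f'(x_i-x_j)\geq 0$, positive semidefiniteness would still hold, but the kernel could be strictly larger than $\H_0$, breaking the rank count and hence the conclusion.
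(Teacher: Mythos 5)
Your proposal is correct and follows essentially the same route as the paper: both arguments show that the weighted Laplacian $B\,\mathrm{diag}((f'(x_i-x_j))_e)\,B^\trans$ is positive semidefinite with kernel exactly $\ker B^\trans = \H_0$, hence of rank $n-c$, and then invoke Proposition~\ref{prop:stability}(iii). Your write-up merely makes explicit the quadratic-form factorization and the kernel identification that the paper's terser proof leaves implicit.
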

\begin{proof}
The matrix~\eqref{eq:hessian} is positive semidefinite with kernel equal to~$\ker B^\trans$,
and thus rank equal to~$n-c$. Proposition~\ref{prop:stability} applies.
\end{proof}

\begin{example}
Let~$G$ be any graph. Suppose that~$f'(0)>0$. By Corollary~\ref{cor:stability_positive_edges}
the equilibrium~$\x = \0$ is linearly stable up to symmetry.
\end{example}

A different method to analyze equilibrium stability, based on effective resistances
and inspired by the analysis of electrical networks,
is proposed in~\cite{homs2021nonlinear}.


\subsection{Topological Bifurcation Theory}
Suppose that an equilibrium~$\x$ is contained in a $d$-dimensional manifold
of equilibria with~$d>c$. Then it is not isolated up to symmetry and, in general,
we cannot determine stability by eigenvalues. Indeed, in this case
the rank of the matrix~$D^2 \E(\x)$ is at most~$n-d$ and Proposition~\ref{prop:stability}
cannot be applied.

There are cases, however, in which stability can nevertheless determined by eigenvalues.
The idea is parametrizing the manifold of equilibria and analyzing how eigenvalues
changes along the manifold.
This is known as \emph{topological bifurcation theory},
or \emph{bifurcation without parameters}.
The following proposition allows to prove the existence of a manifold of stable equilibria
by computing the eigenvalues at a single point. However, it requires understanding
the geometry of the set of equilibria around the point.

\begin{proposition} \label{prop:stability_bifurcation}
Let~$\x \in \vertexspace$ be an equilibrium and~$d\geq 0$. Suppose that
\begin{enumerate} [(i)]
\item there is an open neighborhood~$U$ of~$\x$ such that~$U$ intersects
	the set of equilibria in a manifold of dimension~$d$;
\item the Hessian matrix~$D^2 \E(\x)$ has rank~$n-d$.
\end{enumerate}
Then the equilibrium~$\x$ is Lyapunov stable if and only
if the matrix~$D^2 \E(\x)$ is positive semidefinite.
Moreover, if the equilibrium~$\x$ is Lyapunov stable, then it is contained in a
$d$-dimensional manifold of Lyapunov stable equilibria.
\end{proposition}
\begin{proof}
Let~$\mathcal M$ denote set of equilibria contained in~$U$.
By hypothesis~$\mathcal M$ is a manifold.
The map~$\y \mapsto \mathrm{Rank} D^2 \E(\y)$ is lower-semicontinuous.
Therefore, up to restricting~$U$, we can suppose~$\mathrm{Rank} D^2 \E(\y) \geq n-2$
for every~$\y \in U$.
However, since~$\dim \mathcal M = d$, this implies that for every~$\y \in \mathcal M'$
we have~$\mathrm{Rank} D^2 \E(\y) = n-2$.
In other words, the manifold of equilibria~$\mathcal M$ is normally hyperbolic:
by Shoshitaishvili's Theorem
(see~\cite[Chapter 1]{liebscher2015bifurcation} or
the original paper~\cite{shoshitaishvili1972bifurcations})
the space is locally foliated by
$(n-d)$-dimensional
manifolds~$(\mathcal N_{\y})_{\y\in \mathcal M}$, such that each manifold~$\mathcal N_{\y}$
is orthogonal to~$\mathcal M$, with intersection~$\mathcal N_{\y} \cap \mathcal M = \{\y\}$,
and the equilibrium~$\y$ is hyperbolic in~$\mathcal N_{\y}$.
In follows that the equilibrium~$\x$ is Lyapunov stable if and only if
the matrix~$D^2 \E(\x)$ is positive semidefinite.
Moreover, since~$\mathrm{Rank} D^2 \E(\y) = n-2$ for every~$\y \in \mathcal M$,
it follows that if~$D^2 \E(\x)$ is positive semidefinite, then it remains positive semidefinite
in a neighborhood of~$\x$ in~$\mathcal M$.
\end{proof}

Notice that Proposition~\ref{prop:stability_bifurcation} applies to the regular points
of the set of equilibria, not to singular points. Where several
manifolds of equilibria intersect, the rank of~\eqref{eq:hessian} can drop.
This is confirmed by Example~\ref{ex:bifurcation}.

Proposition~\ref{prop:stability_bifurcation} concludes that, under certain hypothesis,
Lyapunov stability persists locally.
However, globally, a manifold of equilibria can exhibit change of stability, that is,
topological bifurcation.
Note that the gradient structure of the system restricts the possible
bifurcation phenomena by only allowing real eigenvalues.
For example, topological Hopf bifurcation,
considered in~\cite{fiedler2000generic1, fiedler2000generic2} cannot occur.

\begin{figure}
\centering
\includegraphics[width=0.6\columnwidth]{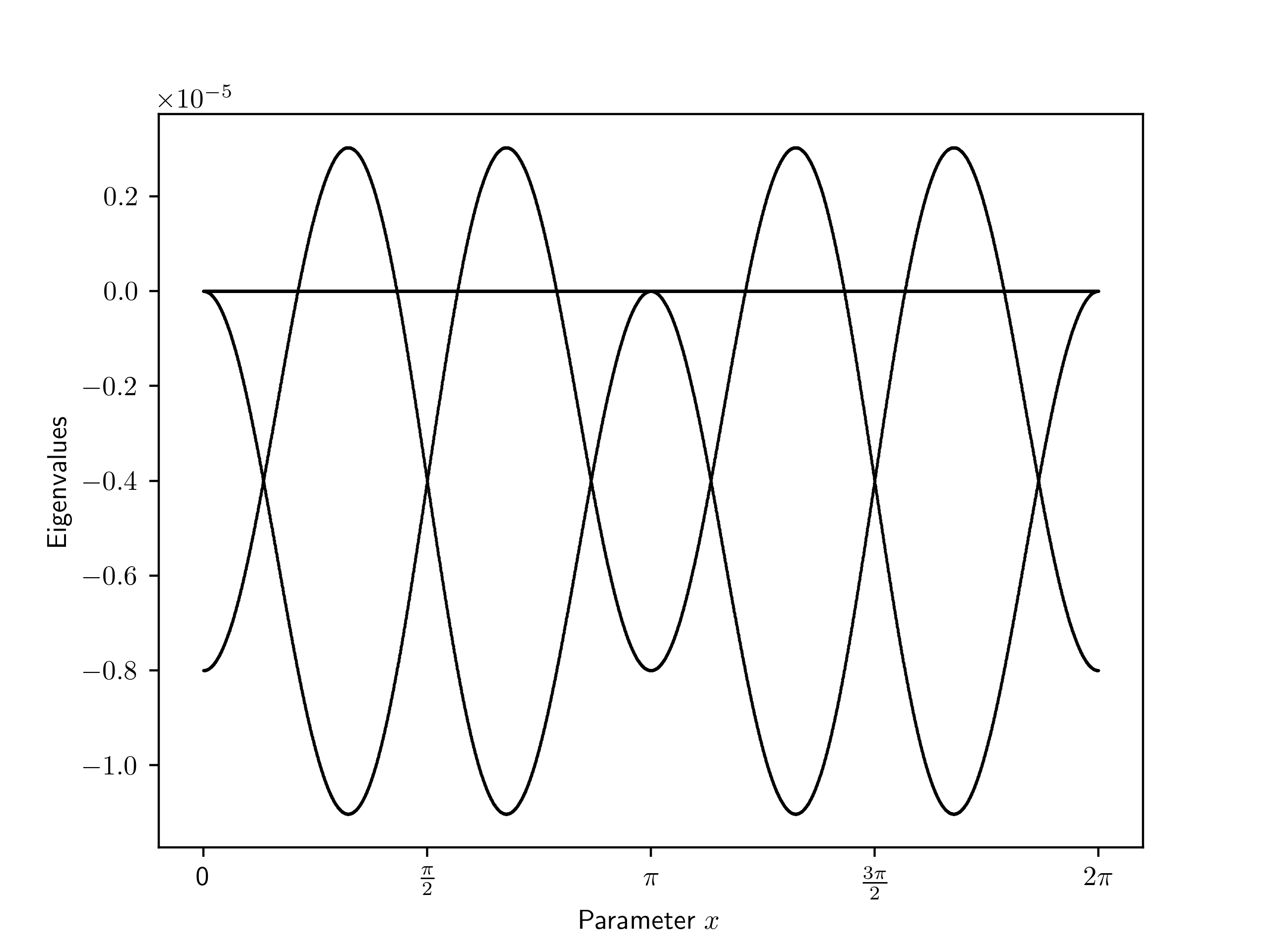}
\caption{
Let~$f(x)=\sin(x)-\sin(3x)$ and~$G=K_4$.
As~$x$ varies in~$[0,2\pi]$, the equilibrium~$(0,x,\pi,\pi+x)$ changes stability.
}
\label{fig:bifurcation}
\end{figure}

\begin{example} \label{ex:bifurcation}
Let~$f(x)=\sin(x)-\sin(3x)$ and~$G=K_4$.
As~$x$ varies in~$\R$ the configuration~$(0,x,\pi,\pi+x)$ traces a curve of equilibria.
As shown in Figure~\ref{fig:bifurcation}, equilibrium stability changes as~$x$ varies,
alternating between saddle and stable equilibrium.
For every~$x$ two eigenvalues are equal to~$0$. This is due to the curve of equilibria
and translational symmetry (with the notation of Proposition~\ref{prop:stability_bifurcation},
we have~$n=4$ and~$d=2$).
At~$x=\pi$ an extra eigenvalue vanishes without changing equilibrium stability.
This degeneracy is due to the symmetry group~$\aut(K_4)$:
at~$x=\pi$ the curve of equilibria~$(0,x,\pi,\pi+x)$
intersects another curve of equilibria, namely~$(0,\pi,x,\pi+x)$.
A similar phenomenon occurs at~$x=0$.
\end{example}


\subsection{Block Decomposition Preserves Stability}
Proposition~\ref{prop:blocks} in Section~\ref{sec:preliminaries} shows that
the block decomposition of a graph~$G$ into its $2$-vertex connected components~$G_1,\ldots,G_p$
preserves equilibria, in the sense that~$\x \in \R^{V(G)}$ is an equilibrium on the graph~$G$
if and only if the restrictions~$\x\vert_{V(G_k)} \in \R^{V(G_k)}$ are equilibria
for each block~$G_k$.
The following result shows that block decomposition preserves
Lyapunov, asymptotic and linear stability.
Together with Proposition~\ref{prop:blocks}, this implies
Theorem~\ref{thm:intro_blocks_stability}
stated in the introduction.
Effectively, this result allows to reduce the analysis of equilibria and stability
to $2$-vertex-connected graphs.

\begin{theorem} \label{thm:blocks_stability}
Suppose that~$f\in \anal$ and that the graph~$G$ is connected.
Let
$
	G = G_1 \cup \cdots \cup G_p
$
be the block composition of~$G$.
An equilibrium~$\x \in \R^{V(G)}$ is Lyapunov stable in~$G$
if and only if for every block~$G_k$ the equilibrium~$\x\vert_{V(G_k)} \in \R^{V(G_k)}$
is Lyapunov stable in~$G_k$. 
The statement remains true if ``Lyapunov stable'' is replaced by
``asymptotically stable equilibrium up to symmetry'',
or ``linearly stable equilibrium up to symmetry''.
\end{theorem}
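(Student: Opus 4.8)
The plan is to leverage the gradient structure together with the additive decomposition of the energy function across blocks. The key observation is that the energy $\E(\x)$ defined in \eqref{eq:energy} is a sum over edges, and every edge of $G$ belongs to exactly one block. Therefore, if $G = G_1 \cup \cdots \cup G_p$ is the block decomposition, then
\[
	\E(\x) = \sum_{k=1}^p \E_k(\x\vert_{V(G_k)}),
\]
where $\E_k$ is the energy function associated to the block $G_k$. This decomposition is the engine behind all three equivalences. First I would establish this additive splitting carefully, noting that the vertex sets of distinct blocks overlap only in cut-vertices, so the variables are coupled solely through shared cut-vertices; this is exactly the combinatorial content of Proposition~\ref{prop:blocks}.

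For the Lyapunov stability equivalence, I would use part (i) of Proposition~\ref{prop:stability}, which identifies Lyapunov stability with being a local minimizer of $\E$. The goal reduces to showing that $\x$ is a local minimizer of $\E$ if and only if each restriction $\x\vert_{V(G_k)}$ is a local minimizer of $\E_k$. The nontrivial direction is that block-wise local minimality implies global local minimality: here the block-tree structure is essential. I would argue by induction on the number of blocks, peeling off a leaf block of the block-tree (a block containing only one cut-vertex) and exploiting that the variables private to that leaf block can be minimized independently once the cut-vertex value is fixed. Because the energy splits additively and the only shared variable is the single cut-vertex, a configuration that fails to be a local minimizer must fail on some block, and conversely local minimizers of the pieces assemble into a local minimizer of the whole.

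For the asymptotic (up to symmetry) and linear (up to symmetry) statements, I would reduce both to Hessian computations via parts (ii) and (iii) of Proposition~\ref{prop:stability}. The Hessian $D^2\E(\x)$ from \eqref{eq:hessian} inherits a block structure: since $\E = \sum_k \E_k$, the Hessian is a sum $D^2\E(\x) = \sum_k D^2\E_k(\x\vert_{V(G_k)})$, where each summand acts nontrivially only on coordinates in $V(G_k)$ and overlaps with others only at cut-vertex coordinates. For linear stability I would show that positive semidefiniteness of $D^2\E$ with the correct rank $n-c$ is equivalent to each block Hessian being positive semidefinite with its own correct rank; the rank bookkeeping must account for cut-vertices being counted once globally but appearing in several blocks, which is precisely where the graph-theoretic relation $n - c = \sum_k (n_k - 1)$ for a connected graph (with $n_k = |V(G_k)|$, summing edges of the block-tree) enters. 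For asymptotic stability up to symmetry, I would combine the linear characterization with part (ii), using that isolatedness of the minimizer up to symmetry on $G$ corresponds to isolatedness on each block modulo the respective constants of motion.

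The main obstacle I anticipate is the rank and kernel bookkeeping in the linear-stability case. The kernels of the block Hessians each contain the block's translational direction $\ker B_k^\trans$, but these directions are \emph{not} independent once glued at cut-vertices, since the global kernel $\ker B^\trans = \H_0(G)$ is only $c$-dimensional (one-dimensional per connected component, hence one-dimensional here as $G$ is connected). Ensuring that the sum of block Hessians attains rank exactly $n-c$ — neither losing rank through an unexpected shared kernel direction beyond the single global translation, nor through degeneracy at a cut-vertex — will require a careful argument that the only common kernel vector of all block Hessians, after accounting for the gluing, is the global all-ones direction. I would resolve this by showing that a kernel vector of $\sum_k D^2\E_k$ restricts to a kernel vector of each $D^2\E_k$ (using positive semidefiniteness, so the sum vanishes iff each term vanishes), and then propagating the resulting constancy of the vector across the block-tree through the cut-vertices to conclude it is globally constant.
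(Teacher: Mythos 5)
Your overall strategy coincides with the paper's: additive splitting of the energy over blocks, the local-minimizer characterization of Lyapunov stability from Proposition~\ref{prop:stability}(i), and, for linear stability, the decomposition of the quadratic form with kernel vectors restricting to block kernel vectors and constancy propagating across cut-vertices --- that last sketch is essentially the paper's argument and is sound. However, your treatment of the Lyapunov case has the hard direction backwards. The implication ``each $\x\vert_{V(G_k)}$ is a local minimizer of $\E_k$ $\Rightarrow$ $\x$ is a local minimizer of $\E$'' is immediate from additivity, because the restriction of a nearby configuration is nearby; no block-tree induction or leaf-peeling is needed. The genuinely nontrivial direction is the converse: if some block restriction is \emph{not} a local minimizer, an energy-decreasing perturbation of that block must be turned into an energy-decreasing perturbation of all of $G$. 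Such a block perturbation will in general move the cut-vertex value, so it cannot simply be glued to the unchanged configuration on the remaining blocks. The missing device is translational invariance of the block energy: translate the block perturbation so that it agrees with $\x$ at the cut-vertex (this changes neither its energy nor, up to a bounded factor, its distance to $\x\vert_{V(G_k)}$), and only then extend it by $\x$ elsewhere. Your proposal states the easy implication twice, in two phrasings, and never supplies this normalization; your remark that private variables ``can be minimized independently once the cut-vertex value is fixed'' implicitly uses exactly this equivalence between fixed-cut-vertex minimality and full minimality, which is false without translation invariance.

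The second, more serious problem is your plan to handle asymptotic stability ``by reducing to Hessian computations via parts (ii) and (iii).'' Part (ii) is not a Hessian criterion: asymptotic stability up to symmetry is equivalent to being an \emph{isolated local minimizer} up to symmetry, a statement about the energy landscape that the Hessian cannot decide at degenerate critical points (for $h(x)=x^4$ the origin is asymptotically stable for $\dot x=-h'(x)$ while the second derivative vanishes); indeed Proposition~\ref{prop:stability}(ii) exists precisely to cover equilibria to which (iii) does not apply. The assertion that isolatedness up to symmetry on $G$ ``corresponds to'' isolatedness on each block is exactly the statement requiring proof, not an observation. The paper proves it by working with strict minimality up to symmetry: if a perturbation $\x'$ of $\x$ satisfies $\E_G(\x')\leq\E_G(\x)$, then both block restrictions are non-increasing perturbations, hence each is a translation of the corresponding block restriction of $\x$, by some $\lambda$ on one block and some $\mu$ on the other; the shared cut-vertex forces $\lambda=\mu$, so $\x'$ is a global translation of $\x$. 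This matching of translation amounts at the cut-vertex, together with the same normalization trick as in the Lyapunov case for the converse direction, is the substantive content that your proposal omits.
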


\begin{proof}
It is enough to prove the statement for~$p=2$. Suppose that~$G$ is the union of
two connected graphs~$H$ and~$K$ intersecting in a vertex~$k$. Up to relabeling the vertices
we have
\begin{align*}
	\x\vert_{H} = (x_1,x_2,\ldots,x_k), \quad
	\x\vert_{K} = (x_k,x_{k+1},\ldots,x_n).
\end{align*}
Proposition~\ref{prop:blocks} shows that~$\x$ is an equilibrium of~$G$ if and only
if~$\x\vert_{H}$ and~$\x\vert_{K}$ are equilibria of~$H$ and~$K$ respectively.
By definition of energy~\eqref{eq:energy} we obtain
\begin{equation} \label{eq:k_energy}
	\E_G(x_1,\ldots,x_n) =
		\E_{H} (x_1,\ldots,x_k) + \E_{K} (x_k,\ldots,x_n).
\end{equation}
Throughout the proof, we call \emph{perturbation} of a point~$\x$ a point~$\x'$
that can be chosen arbitrarily close to~$\x$.

Let us first prove the statement in the case Lyapunov stability.
From~\eqref{eq:k_energy} it follows
that if~$\x\vert_{H}$ and~$\x\vert_{K}$ are local minimizers
of~$\E_{H}$ and~$\E_{K}$ respectively,
then~$\x$ is a local minimizer of~$\E_G$.
Conversely, suppose that~$\x\vert_{H}$ is not a local minimizer of~$\E_H$.
Then there is perturbation~$\x'\vert_H = (x'_1,\ldots,x'_k)$
of~$\x\vert_H$ that makes~$\E_H$ decrease.
Up to translational symmetry in~$\R^{V(H)}$, we can suppose~$x'_k=x_k$.
Define
\begin{equation} \label{eq:k_lyapunov}
	\x' = (x'_1,\ldots,x'_{k-1},x_k,x_{k+1},\ldots,x_n).
\end{equation}
Since~$\x'_K = \x_K$, it follows from~\eqref{eq:k_energy}
that~$\x'$ is a perturbation of~$\x$ that makes the energy~$\E_G$ decrease.
It follows that~$\x$ is not a local minimizer of~$\E_G$.
An analogous argument shows that if~$\x\vert_{H}$ is not a local minimizer of~$\E_H$,
then~$\x$ is not a local minimizer of~$\E_G$.
This completes the proof for Lyapunov stable equilibria.

Let us prove the statement for asymptotic stability up to symmetry.
Suppose that~$\x\vert_{H}$ and~$\x\vert_{K}$ are strict local minimizers
up to symmetry in the respective systems. Take a perturbation~$\x'$ of~$\x$.
We want to prove that either~$\E_G(\x')>\E_G(\x)$ or~$\x'$ and~$\x$ are equivalent
up to translational symmetry in~$G$.
We have~$\E_H(\x'\vert_H)\geq \E_H(\x\vert_H)$ and~$\E_K(\x'\vert_K)\geq \E_K(\x\vert_K)$.
If any of the these two inequalities is strict, then~$\E_G(\x')>\E_G(\x)$.
Otherwise~$\E_H(\x'\vert_H)= \E_H(\x\vert_H)$ and~$\E_K(\x'\vert_K)= \E_K(\x\vert_K)$,
and since~$\x\vert_{H}$ and~$\x\vert_{K}$ are strict local minimizers up to symmetry,
it follows that there are~$\lambda, \mu\in \R$ such that
\[
	(x'_1,\ldots,x'_k) = (x_1 + \lambda, \ldots, x_k+\lambda), \quad
	(x'_k,\ldots,x'_n) = (x_k + \mu, \ldots, x_n+\mu).
\]
From these identities it follows that~$\lambda=\mu$. Therefore~$\x'$ and~$\x$
are equivalent up to translational symmetry in~$G$.
Conversely, suppose that~$\x_H$ is not a strict local minimizer of~$H$ up to symmetry.
Then there is perturbation~$\x'\vert_H = (x'_1,\ldots,x'_k)$ of~$\x\vert_H$
such that~$\E_H(\x') \leq \E_H(\x)$ with~$\x'\vert_H$ and~$\x\vert_H$
not equivalent up to translational symmetry in~$H$.
Up to translational we can suppose~$x'_k=x_k$. Now define~$\x'$
as in~\eqref{eq:k_lyapunov}, notice that~$\E_G(\x')\leq \E_G(\x)$
and that the perturbation~$\x'$ is not equivalent to~$\x$ up to translational symmetry in~$G$.

It remains to prove the statement for linearly stability up to symmetry.
Let~$\F$ denote the vector field and~$D\F(\x)$ linearization at~$\x$.
For every~$\v \in \vertexspace$
\begin{equation} \label{eq:k_laplacian}
	\v^T D\F_G (\x) \v =
    	\underbrace{\sum_{(i,j)\in E(H)} f'(x_i-x_j) (v_i-v_j)^2}_{
    		\v\vert_H^T D\F_H (\x\vert_H) \v\vert_H} +
    	\underbrace{\sum_{(i,j)\in E(K)} f'(x_i-x_j) (v_i-v_j)^2}_{
    		\v\vert_K^T D\F_K (\x\vert_K) \v\vert_K}.
\end{equation}
Suppose that~$\x_H$ and~$\x_K$ are linearly stable up to symmetry in~$H$ and~$K$ respectively.
Then~$D\F_H (\x\vert_H)$ and~$D\F_K (\x\vert_K)$ are negative semidefinite,
and therefore~$D\F_G (\x)$ is negative semidefinite. Moreover,
if~$\v^T D\F_G (\x) \v = 0$, then~$\v\vert_H^T D\F_H (\x\vert_H) \v\vert_H = 0$
and~$\v\vert_K^T D\F_K (\x\vert_K) \v\vert_K = 0$. It follows that there are~$\lambda,\mu \in \R$
such that~$\v\vert_H = (\lambda,\ldots,\lambda)$ and~$\v\vert_H = (\mu,\ldots,\mu)$.
Since the vertex~$k$ is in common, then~$\lambda=\mu$. This shows that the kernel of~$D\F_G (\x)$
has dimension~$1$. We conclude that~$\x$ is linearly stable up to symmetry.
Conversely, suppose that~$D\F_H (\x\vert_H)$ is not negative definite up to symmetry.
There are two possible reasons for this to occur.
First, the quadratic form~$\v\vert_H^T D\F_H (\x\vert_H) \v\vert_H$
has some positive value, or, second,
the kernel contains a vector whose coefficients are not all equal.
If~$\v\vert_H^T D\F_H (\x\vert_H) \v\vert_H > 0$
for some~$\v\vert_H\in \R^{V(H)}$,
then from~\eqref{eq:k_laplacian} it follows that the vector
\begin{equation} \label{eq:k_vector}
	\v = (v_1,\ldots,v_{k-1},v_k,v_k,\ldots,v_k)
\end{equation}
satisfies~$\v^T D\F_G (\x) \v>0$, showing that~$D\F_G (\x)$ is not negative semidefinite.
If
\[
	\v\vert_H^T D\F_H (\x\vert_H) \v\vert_H = 0
\]
for some vector~$\v\vert_H$ whose coefficients
are not all equal to each other, then the vector~$\v$ defined as in~\eqref{eq:k_vector}
satisfies~$\v^T D\F_G (\x) \v = 0$ and its coefficients are not all equal to each other.
This completes the proof.
\end{proof}


\section{Open Problems} \label{sec:discussion}
If the function~$f$ is a polynomial, the set of equilibria is an algebraic set.
In this case, methods from algebraic geometry might lead to more precise results.

\begin{problem}
Classifying the graphs~$G$ and polynomials~$f$ such that~$\dim \equilibria>0$.
\end{problem}

However, a significant challenge arises due to the fact that the field~$\R$
is not algebraically closed.
For example, while the Gr\"obner basis allows to decide whether~$\dim X=0$
or~$\dim X >0$ for an algebraic set~$X$ in an algebraically closed field,
the same is not true in the field of real numbers.

We conjecture that for a generic graph~$G$ and a generic function~$f$
the set of equilibria up to symmetry~$\equilibria$ is discrete:

\begin{conjecture}
As~$n\to \infty$, for asymptotically almost every graph~$G$ with~$n$ vertices,
the set of non-zero real-analytic functions~$f$ such that~$\dim \equilibria>0$ is finite.
\end{conjecture}

Note that this conjecture is not in contrast with Example~\ref{ex:poly_cycle}
(resp. Example~\ref{ex:skew_book}) in which, for \emph{specific} graphs,
the inequality~$\dim \equilibria>0$ is satisfied by a $1$-dimensional
(resp. infinite-dimensional) family of~$f$.

Particular cases such as~$f(x)=\sin(x)$ and~$f(x)=x-x^3$ have been proposed
to model physical phenomena.
For example, the latter models polarization
in opinion dynamics~\cite{devriendt2021nonlinear}.
The existence of a continuum of equilibria in such models,
a structurally unstable phenomenon, can be interpreted
as a limitation of the model, or as valuable insight into the underlying physical phenomena.
In specific applications, it might be interesting to see what dynamics bifurcates from
such structures, for example by adding noise to the model.

\begin{problem}
Describing the dynamics near manifolds of equilibria
if additive Gaussian noise is added to~\eqref{eq:main}.
\end{problem}



\section*{Acknowledgements}
This paper was written while the author was a PhD candidate at the Vrije Universiteit Amsterdam.
The author is thankful to the institution for its support.
Special thanks go to Eddie Nijholt for valuable discussions, in particular
for pointing out implication~(h) in Figure~\ref{fig:gradient_optimization} and
for spotting several typos.


\bibliographystyle{siam}
\bibliography{refs}


\end{document}